\documentclass{article}
%%
% The geometry package allows for easy page formatting.
\usepackage{geometry}
\geometry{letterpaper}
\usepackage{titling}
%sepackage{mathrsfs}
%\usepackage{calrsfs}
\setlength{\droptitle}{-9em}

\AtEndDocument{\bigskip{\footnotesize%
\textsc{${}^*$School of Basic and Applied Science, Lingaya's Vidyapeeth Faridabad, Haryana–121002, India} \par  
  \textit{E-mail address:} \texttt{dr.surya@lingayasvidyapeeth.edu.in} \par

  \addvspace{\medskipamount}
% \textsc{$^2$Department of Applied Mathematics, Delhi Technological University, Delhi–110042, India} \par
%  \textit{E-mail address:} \texttt{spkumar@dtu.ac.in} \par
  %\addvspace{\medskipamount}
  %(J.~Jones) \textsc{Department of Philosophy, Freedman College, Periwinkle, Colorado 84320} \par
  %\textit{E-mail address}, J.~Jones: \texttt{id739e@@oseoi44 (Bitnet)}
}}
% Load up special logo commands.
\usepackage{doc}

% Package for formatting URLs.
\usepackage{url}

% Packages and definitions for graphics files.
\usepackage{graphicx}
\usepackage{epstopdf}
\DeclareGraphicsRule{.tif}{png}{.png}{`convert #1 `dirname #1`/`basename #1 .tif`.png}

%% Some mathematical symbols are not included in the basic LaTeX
%% package.  Uncommenting the following makes more commands
%% available.
%%
\usepackage{amsthm}
\usepackage{amssymb}
\usepackage{amsmath}
\usepackage{enumitem}

%% The following is very useful in keeping track of labels while
%% writing.  The variant
%\usepackage[notcite]{showkeys}
%% does not show the labels on the \cite commands.
%%
%\usepackageshowkeys}
%%%%
%%%% The next few commands set up the theorem type environments.
%%%% Here they are set up to be numbered section.number, but this can
%%%% be changed.
%%%%
%%
%% If some other type is need, say conjectures, then it is constructed
%% by editing and uncommenting the following.
%%

%\newtheorem{conj}[thm]{Conjecture}

%%%
%%% The following gives definition type environments (which only differ
%%% from theorem type invironmants in the choices of fonts).  The
%%% numbering is still tied to the theorem counter.
%%%
%\DeclareMathOperator{\RE}{Re}
%\usepackage{hyperref}

\usepackage{array}
\usepackage{enumitem}
\usepackage[utf8]{inputenc}
\usepackage[english]{babel}
\newtheorem{theorem}{Theorem}
\newtheorem{corollary}{Corollary}[theorem]
\newtheorem{lemma}[theorem]{Lemma}

\DeclareMathOperator{\RE}{Re}

\usepackage{varwidth}
\usepackage{lineno, hyperref}
\usepackage{graphicx}
\usepackage{epstopdf}
\usepackage{amsmath}
\usepackage{amsthm}
\usepackage{enumitem}
\usepackage{amsthm}
\usepackage{float}
\usepackage{subcaption}
\usepackage{caption}
%\usepackage{fontspec}
%\setmainfont{Times New Roman}
%\usepackage{pslatex } %for time roman font
\usepackage{authblk}
\usepackage{abstract}

\begin{document}
\title{ Toeplitz Determinants for Inverse Functions and their Logarithmic Coefficients Associated with Ma-Minda Classes }
\author{Surya Giri$^{*}$  }
%\address{Department of Applied Mathematics, Delhi Technological University, Delhi, SC 29208}
%\email{}
%\author{Surya Giri Mahant}
%\address{}
%\email{surya}
%\author[$\dagger$]{Surya Giri}
%\author[$\star$]{S. Sivaprasad Kumar}
%\author[$\ddag$]{J. Jones}

%%\affil[$\dagger$,$\star$]{Department of Applied Mathematics, Delhi Technological University, Delhi–110042, India}\\
%\vspace{0.5cm}
%\vspace{0.5cm}

  %\textit{E-mail address}, R.~Campbell: \texttt{campr@galois.psu.edu}
%\affil[$\star$]{Atmospheric Research Station,
%Pala Lundi, Fiji}
%\affil[$\ddag$]{Department of Philosophy, Freedman College,
%Periwinkle, Colorado 84320}
\date{}

%\title{Bohr-Rogosinski phenomenon for $\mathcal{S}^*(\psi)$ and $\mathcal{C}(\psi)$}
%	\thanks{K. Gangania thanks to University Grant Commission, New-Delhi, India for providing Junior Research Fellowship under }	

%	\author[Kamaljeet]{Kamaljeet Gangania}
%	\address{Department of Applied Mathematics, Delhi Technological University,
%%	\email{gangania.m1991@gmail.com}
	
%	\author[S. Sivaprasad Kumar]{S. Sivaprasad Kumar}
%	\address{Department of Applied Mathematics, Delhi Technological University,
%		Delhi--110042, India}
%	\email{spkumar@dce.ac.in}

\maketitle	

\begin{abstract}
    \noindent The classes of analytic univalent functions on the unit disk defined by
   $$ \mathcal{S}^*(\varphi)= \bigg\{ f \in \mathcal{A}: \frac{z f'(z)}{f(z)} \prec \varphi(z)\bigg\}$$
    and
   $$ \mathcal{C}(\varphi)=\bigg\{ f \in \mathcal{A}: 1 + \frac{z f''(z)}{f'(z)} \prec \varphi(z)\bigg\} $$
  generalize various subclasses of starlike and convex functions, respectively.
In this paper, sharp bounds  are established for certain Toeplitz determinants constructed over the coefficients and logarithmic coefficients of inverse functions belonging to $\mathcal{S}^*(\varphi)$ and $\mathcal{C}(\varphi)$. Since these classes covers many well-known subclasses, the derived bounds are directly applicable to them as well. %This leads to several new results, as well as some already known ones as special cases, thereby showing the applicability of the work. 
\end{abstract}
\vspace{0.5cm}
	\noindent \textit{Keywords:} Toeplitz determinants; Starlike functions; Convex functions; Inverse functions; Logarithmic coefficients.\\
	\noindent \textit{AMS Subject Classification:} 30C45, 30C50.

\section{Introduction}
       Let $\mathcal{A}$ denote the class of analytic functions defined on the open unit disk $\mathbb{U}=\{ z \in \mathbb{C}: \vert z \vert < 1 \}$ that have the series expansion
\begin{equation}\label{a2a3a4}
    f(z)= z + \sum_{n=2}^\infty a_n z^n .
\end{equation}
     The subclass of $\mathcal{A}$ consisting of univalent functions is denoted by $\mathcal{S}$. Two of the most studied subclasses of $\mathcal{S}$ are the classes of starlike and convex functions, represented by $\mathcal{S}^*$ and $\mathcal{C}$, respectively. A function $f\in \mathcal{S}^*$ if and only if $\RE (z f'(z)/f(z))>0$. Similarly, a function $f\in \mathcal{C}$ if and only if $\RE (1 + z f''(z)/f'(z))>0 .$ Another significant class is $\mathcal{P}$, which contains analytic functions $p$ in $\mathbb{U}$ that satisfy $\RE p(z)>0$ and have the form
     $$ p(z)= 1+ \sum_{n=1}^\infty p_n z^n.$$
     This class is commonly known as the Carath\'{e}odory class.

      For any two analytic functions $f$ and $g$ in $\mathcal{A}$, the function $f$ is said to be subordinate to $g$,  denoted by $f \prec g$, if there exists a Schwarz function $\omega(z)$ such that $f(z) = g(\omega(z))$.
    The family of all Schwarz functions $\omega(z) $ satisfying $\omega(0)= 0$ and $\vert \omega(z) \vert \leq 1 $ for all $z\in \mathbb{U}$, is expressed by $\mathcal{B}_0$.  Incorporating the notion of subordination, Ma and Minda~\cite{MaMin} unified various subclasses of $\mathcal{S}^*$ and $\mathcal{C}$ by defining
    $$ \mathcal{S}^*(\varphi)= \bigg\{ f \in \mathcal{A} : \frac{z f'(z)}{f(z) } \prec \varphi(z) \bigg\}$$
    and
    $$ \mathcal{C}(\varphi) =\bigg\{ f \in \mathcal{A} : 1 + \frac{z f''(z)}{f'(z) } \prec \varphi(z) \bigg\}  ,$$
    where $\varphi(z)$ is an analytic univalent function in $\mathbb{U}$ that satisfies $\varphi'(0) >0$ and maps the unit disk onto a domain in the right half plane, which is symmetric about the real axis and starlike with respect to $\varphi(0)=1$.  It is obvious that $\mathcal{S}^*((1+z)/(1-z))= \mathcal{S}^*$ and $\mathcal{C}((1+z)/(1-z))= \mathcal{C}$ are the classes of starlike and convex functions, respectively. If $0 \leq \alpha <1$, then $\mathcal{S}^*((1+ (1- 2 \alpha) z)/(1 -z)) =: \mathcal{S}^*(\alpha)$ and $\mathcal{C}((1+ (1- 2 \alpha) z)/(1 -z)) =: \mathcal{C}(\alpha)$ become the classes of starlike and convex functions of order $\alpha$, respectively. For $\varphi(z)=(1+A z)/(1+Bz)$, where $-1 \leq B < A \leq 1$, the classes $\mathcal{S}^*(\varphi)$ and $\mathcal{C}(\varphi)$ reduce to the classes $\mathcal{S}^*[A,B]$ and $\mathcal{C}[A,B]$, respectively, introduced by Janowski~\cite{Jan}. By taking $\phi(z)=((1+z)/(1-z))^\beta$, $0 < \beta \leq 1$, we obtain the classes of strongly starlike and strongly convex functions, denoted by $\mathcal{SS}^*(\beta)$ and $\mathcal{CC}(\beta)$, respectively. Various interesting subclasses of starlike and convex functions by confining the values of $zf'(z)/f(z)$ to a defined region within the right half-plane  were introduced and studied.
     Some of them are listed in  Table \ref{tb1} along with their respective class notations.
\begin{table}[ht]
  \centering
  \caption{Subclasses of starlike functions}
  \label{tb1}
  \begin{tabular}{lll}
    \hline
    {\bf{Class}}  & \textbf{$\varphi(z)$} &  \textbf{Reference} \\ [0.8ex]
    \hline
       $\mathcal{S}^*_{\varrho}$   &   $1+z e^z$   &   Kumar and  Gangania~\cite{KumGan}      \\ [0.8ex]
    \hline
    $\mathcal{S}^*_e$ & $ e^{z}$    & Mendiratta et al.~\cite{MenNagRav} \\ [0.8ex]
    \hline
     $\Delta^*$ & $z + \sqrt{1+z^2}$ & Raina and Sok\'{o}\l~\cite{RaiSok}\\
    \hline
    $\mathcal{S}_P$  & $1 + \frac{2}{\pi^2} \bigg( \log{\frac{1+ \sqrt{z}}{1- \sqrt{z}}} \bigg)^2$ &  R{\o}nning~\cite{Ron} \\ [0.8ex]
  \hline
    $\mathcal{S}^*_L$ & $ \sqrt{1+z^2}$ & Sok\'{o}{\l}  and  Stankiewicz~\cite{SokSta}\\ [0.8ex]
    \hline

  \end{tabular}
\end{table}

    %Finding the estimates of Taylor series coefficients and coefficients functional of functions in the subclasses of $\mathcal{S}$ comes under the coefficient problem in Geometric Function Theory. Coefficient problems has a significant contribution in the development of this field. The most famous coefficient problem was Bieberbach conjecture, which took around 62 years to prove. During this period, various other subclasses of analytic univalent functions were introduced and different coefficient problem were studied for them.

    Recently, finding sharp estimates for Toeplitz determinants constructed over the coefficients of functions belonging to different classes gained notable attention of researchers. These determinants have important applications in areas such as algebra, signal processing and time series analysis~\cite{YeLim}. Firstly, Ali et al.~\cite{AliThoVas} considered the Toeplitz determinant for $f(z)=z+\sum_{n=2}^\infty a_n z^n \in \mathcal{A}$, defined by
\begin{equation}\label{Toepla2a3}
     T_{m,n}(f)= \begin{vmatrix}
	a_n & a_{n+1} & \cdots & a_{n+m-1} \\
	{a}_{n+1} & a_n & \cdots & a_{n+m-2}\\
	\vdots & \vdots & \vdots & \vdots\\
     {a}_{n+m-1} & {a}_{n+m-2} & \cdots & a_n\\
	\end{vmatrix}  \quad \quad (m \in \mathbb{N}).
\end{equation}
    They obtained sharp estimates for $\vert T_{2,n}(f) \vert$, $\vert T_{3,1}(f) \vert$ and $\vert T_{3,2}(f) \vert$ for the class $\mathcal{S}$ and some of its subclasses, including $\mathcal{S}^*$ and $\mathcal{C}$. Motivated by this work, several researchers subsequently derived estimates for various Toeplitz determinants associated with different subclasses of $\mathcal{S}$~\cite{AhuKhaRav,CudKwoLecSimSmi,GirKum,GirKum2,GirKum3,LecSimBsm}.

    For function $f\in \mathcal{S}$, consider
    $$ F_f(z) := \log \frac{f(z)}{z}= 2 \sum_{n=1}^\infty \gamma_n z^n, \quad \log{1}=0. $$
    The coefficients $\gamma_n$ associated with each $f\in \mathcal{S}$ are called the logarithmic coefficients. These coefficients play an important role in Milin's conjecture~(\cite[p. 155]{Dur}\cite{Mil}). It is surprising that, the only known sharp estimates for functions in $\mathcal{S}$ are
    $$\vert\gamma_1\vert \leq 1\;\;\; \text{and}\;\;\; \vert \gamma_2 \vert \leq \frac{1}{2}+\frac{1}{e},$$
     whereas finding sharp estimates for other coefficients remains an open problem. Many authors obtained  sharp estimates of the logarithmic coefficients for various subclasses of $\mathcal{S}$~\cite{AliVas,ChoKowKwoLecSim,Zap}.

     The Koebe one-quarter theorem enables us to define inverse function for each $f\in \mathcal{S}$ at least on a disc of radius $1/4$ with the Taylor's series expansion
\begin{equation}\label{bn's}
      f^{-1}(w)= w+ \sum_{n=2}^\infty b_n w^n.
\end{equation}
     By applying the identity $f(f^{-1}(w))=w$, we get
\begin{equation}\label{b2b3b4}
\begin{aligned}
\left.
\begin{array}{ll}
     b_2 &=-a_2,\\
     b_3 &= -a_3 + 2 a_2^2, \\
     b_4 &= -5 a_2^3 + 5 a_2 a_3 -a_4.
\end{array}
\right\}
\end{aligned}
\end{equation}
    Ponnusamy et al.~\cite{PonShaWir} introduced the notion of logarithmic coefficients for the inverse function, defined as
\begin{equation}\label{Gn's}
     F_{f^{-1}}(w) := \log\bigg(\frac{f^{-1}(w)}{w}\bigg) = 2 \sum_{n=1}^\infty \Gamma_n w^n .
\end{equation}
    Applying (\ref{bn's}) and (\ref{b2b3b4}), we obtain
\begin{equation}\label{G2G3G4}
   \Gamma_1 =-\frac{a_2}{2},\;\;  \Gamma_2 = - \frac{1}{2}\bigg( a_3 - \frac{3}{2} a_2^2 \bigg) \;\; \text{and}\;\; \Gamma_3 = -\frac{1}{2} \bigg( a_4 - 4 a_2 a_3 + \frac{10}{3} a_2^3 \bigg).
\end{equation}
 %   The authors derived the sharp estimates of $\vert \Gamma_n\vert$ for functions in the class $\mathcal{S}$ for all $n$.
   Thus, from (\ref{Toepla2a3}) using the coefficients in (\ref{bn's}), the second order Toeplitz determinants for inverse functions are given by
\begin{equation}\label{Toeplb2b3}
    T_{2,1}(f^{-1})= b_2^2 -b_3^2, \quad T_{2,2}(f^{-1})= b_3^2 -b_4^2.
\end{equation}
   Similarly, using (\ref{Gn's}), the second-order Toeplitz determinants for logarithmic coefficients of inverse functions are given as
\begin{equation}\label{ToeplG2G3}
    T_{2,1}(F_{f^{-1}})=\Gamma_1^2 - \Gamma_2^2, \quad T_{2,2}(F_{f^{-1}})= \Gamma_2^2 - \Gamma_3^2 .
\end{equation}
     Hadi et al.~\cite{HadSalLupAlsAla} derived the sharp bounds for $\vert T_{2,1}(f^{-1})\vert$ and $\vert T_{2,2}(f^{-1})\vert$ for the classes of starlike and convex functions with respect to symmetric points. Mandal et al.~\cite{ManRoyAha} established sharp estimates for second-order Hankel and Toeplitz determinants associated with the logarithmic coefficients of inverse functions for the same classes.  Kumar et al.~\cite{KumTriPan} obtained the bounds for $\vert   T_{2,1}(F_{f^{-1}})\vert$ for a subclass of starlike functions. A good amount of literature is available on these coefficient functionals constructed over the coefficients and logarithmic coefficients of inverse functions for different subclasses of $\mathcal{S}$~\cite{AllSha,GirKum4,GirKum3,SriShaIbrTchKha,WahTumSha}.

     Motivated by these works, we determine the sharp bounds for $\vert T_{2,1}(f^{-1})$, $\vert T_{2,2}(f^{-1})$, $ \vert T_{2,1}(F_{f^{-1}})\vert$ and $ \vert T_{2,2}(F_{f^{-1}})\vert$ for the classes $\mathcal{S}^*(\varphi)$ and $\mathcal{C}(\varphi)$. Special cases of our results  for various subclasses of starlike and convex functions are also presented for specific choices of $\varphi$.

    The following lemmas are required to prove the main results:
\begin{lemma}\cite{ProSzy}\label{Lemma1}
    If $\omega(z) = \sum_{n=1}^\infty c_n z^n \in \mathcal{B}_0$ and $(\sigma , \mu) \in \cup_{i=1}^3 \Omega_i$, then
    $$ \vert c_3 + \sigma c_1 c_2 + \mu c_1^3 \vert \leq \vert \mu \vert ,$$
    where
\begin{align*}
     \Omega_1 &= \bigg\{ (\sigma, \mu) : \vert \sigma \vert \leq 2, \; \mu \geq 1 \bigg\}, \;\;  \Omega_2 = \bigg\{ (\sigma, \mu) : 2 \leq \vert \sigma \vert \leq 4, \; \mu \geq \frac{1}{12} (\sigma^2 + 8) \bigg\},
\end{align*}
   and
   $$  \Omega_3 = \bigg\{ (\sigma, \mu) : \vert \sigma \vert \geq 4, \; \mu \geq \frac{2}{3} (\vert \sigma\vert - 1) \bigg\}. $$
\end{lemma}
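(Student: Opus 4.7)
The plan is to establish this inequality by means of the Schur--Carath\'eodory parametrization of Schwarz functions, reducing the problem to a one-variable optimization that is then dispatched region by region.

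First, I would invoke the standard parametrization: every $\omega \in \mathcal{B}_0$ admits parameters $\zeta_0,\zeta_1,\zeta_2 \in \overline{\mathbb{U}}$ with
\begin{align*}
c_1 &= \zeta_0,\\
c_2 &= (1-|\zeta_0|^2)\,\zeta_1,\\
c_3 &= (1-|\zeta_0|^2)\bigl[(1-|\zeta_1|^2)\,\zeta_2 - \overline{\zeta_0}\,\zeta_1^{\,2}\bigr].
\end{align*}
Since $\mathcal{B}_0$ is invariant under the rotation $\omega(z)\mapsto e^{-i\theta}\omega(e^{i\theta}z)$, and since the functional $|c_3 + \sigma c_1 c_2 + \mu c_1^3|$ is unchanged under this rotation (with an appropriate choice of $\theta$), I may assume $c_1 = x \in [0,1]$.

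Next I would substitute the parametrization into $c_3 + \sigma c_1 c_2 + \mu c_1^3$ and apply the triangle inequality, first in $\zeta_2$ (using $|\zeta_2|\le 1$), to get a bound of the form
\[
\bigl|c_3 + \sigma c_1 c_2 + \mu c_1^3\bigr| \leq (1-x^2)(1-|\zeta_1|^2) + (1-x^2)\,x\,|\zeta_1|^2 + |\sigma|\,x\,(1-x^2)\,|\zeta_1| + \mu\, x^3.
\]
Writing $t = |\zeta_1|\in[0,1]$, this right-hand side is a quadratic in $t$ whose maximum can be computed explicitly; the maximizer depends on the sign of $1-x$ and on $|\sigma|$. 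Carrying out the $t$-maximization yields a piecewise function $\Phi(x)$ of $x\in[0,1]$.

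The main obstacle, and the bulk of the work, is the subsequent case analysis showing $\Phi(x)\le \mu$ on each of $\Omega_1,\Omega_2,\Omega_3$. Concretely, in $\Omega_1$ where $|\sigma|\le 2$ and $\mu\ge 1$, one checks that the maximum of $\Phi$ occurs at $x=0$ or $x=1$, giving values $1$ and $\mu$ respectively, so $\Phi\le \mu$. In $\Omega_2$, the interior critical point becomes relevant and the threshold $\mu\ge(\sigma^2+8)/12$ is precisely what ensures $\Phi$ at that critical point does not exceed $\mu$; one verifies this by substituting the critical value. In $\Omega_3$, the linear term in $|\sigma|$ dominates and the condition $\mu\ge \tfrac{2}{3}(|\sigma|-1)$ is tailored to keep $\Phi$ below $\mu$ at the corresponding maximizer. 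The sharpness of these thresholds is the delicate part and is the reason the regions $\Omega_i$ take the specific forms stated.
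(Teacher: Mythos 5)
The paper offers no proof of this lemma --- it is quoted verbatim from Prokhorov and Szynal \cite{ProSzy} --- so the only question is whether your outline would actually work. It does not: the termwise triangle inequality in your displayed estimate is too lossy, and the resulting majorant $\Phi$ is \emph{not} bounded by $\mu$ on $\Omega_1$. Concretely, take $\sigma=2$, $\mu=1$ (so $(\sigma,\mu)\in\Omega_1$), $x=0.9$, $t=|\zeta_1|=1$. Your right-hand side evaluates to
$$(1-x^2)\,x\,t^2+|\sigma|\,x\,(1-x^2)\,t+\mu x^3=0.171+0.342+0.729=1.242>1=\mu,$$
and since here $t^*=|\sigma|x/(2(1-x))=9>1$, this value \emph{is} $\Phi(0.9)$. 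So your claim that on $\Omega_1$ the maximum of $\Phi$ occurs at $x=0$ or $x=1$ is false, and the same failure occurs for any $|\sigma|\geq 1$ near $\mu=1$. The lemma itself is still true at these parameters; what breaks is the estimate you are trying to push through.

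The missing idea is the phase interaction among the three terms you separated. After normalizing $\zeta_0=x\geq 0$ and taking the (lossless) supremum over $\zeta_2$, one is left with
$$(1-x^2)(1-t^2)+\bigl|\mu x^3+\sigma x(1-x^2)\zeta_1-(1-x^2)x\,\zeta_1^{\,2}\bigr|,$$
and the point is that $\sigma x\zeta_1$ and $-x\zeta_1^{\,2}$ cannot both be aligned with $\mu x^3$: writing $\zeta_1=te^{i\psi}$ forces a genuine maximization of $|A+Be^{i\psi}+Ce^{2i\psi}|$ with $A,B\geq 0$ and $C\leq 0$, whose maximum over $\psi$ is strictly smaller than $A+B+|C|$. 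It is precisely this trigonometric optimization (carried out in \cite{ProSzy}) that produces the boundaries $\mu\geq 1$, $\mu\geq(\sigma^2+8)/12$ and $\mu\geq\tfrac23(|\sigma|-1)$ of the regions $\Omega_i$; replacing it by the crude bound $A+B+|C|$ destroys exactly the cancellation that makes the value $|\mu|$ (attained by $\omega(z)=z$) extremal. Your parametrization, rotation normalization, and the first reduction in $\zeta_2$ are all correct; the proof needs to be repaired from the point where you drop the argument of $\zeta_1$.
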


\section{For logarithmic inverse coefficients}
   This section is dedicated to establishing estimates for $ \vert T_{2,1}(F_{f^{-1}})\vert$ and $ \vert T_{2,2}(F_{f^{-1}})\vert$ for the classes $\mathcal{S}^*(\varphi)$ and $\mathcal{C}(\varphi)$.
\begin{theorem}\label{thm1}
    Let $f\in \mathcal{S}^*(\varphi)$ and $\varphi(z)= 1+ B_1 z+ B_2 z^2+ B_3 z^3+ \cdots$. If $ \vert B_2 - 2 B_1^2 \vert \geq B_1 $ holds, then
    $$ \vert T_{2,1}(F_{f^{-1}}) \vert  \leq \frac{B_1^2}{4} + \frac{1}{16} (2 B_1^2 - B_2)^2.$$
    The inequality is sharp.
\end{theorem}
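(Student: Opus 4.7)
The plan is to exploit the subordination $zf'(z)/f(z) \prec \varphi(z)$ defining $\mathcal{S}^*(\varphi)$, translate $T_{2,1}(F_{f^{-1}})$ into an expression in the coefficients of a Schwarz function, and then combine a sharp Schwarz-type estimate with a phase-alignment argument to obtain both the upper bound and its sharpness.

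First I would write $zf'(z)/f(z) = \varphi(\omega(z))$ with $\omega(z) = c_1 z + c_2 z^2 + \cdots \in \mathcal{B}_0$, equate coefficients of $z$, $z^2$, $z^3$ on both sides to get $a_2 = B_1 c_1$ and $a_3 = \tfrac{1}{2}(B_1 c_2 + (B_1^2 + B_2) c_1^2)$, and substitute into \eqref{G2G3G4} to obtain
\[ \Gamma_1 = -\tfrac{1}{2} B_1 c_1, \qquad \Gamma_2 = -\tfrac{1}{4}\bigl( B_1 c_2 + (B_2 - 2B_1^2)\, c_1^2 \bigr). \]
Next I would apply the triangle inequality $|\Gamma_1^2 - \Gamma_2^2| \leq |\Gamma_1|^2 + |\Gamma_2|^2$. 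The first summand is at most $B_1^2/4$ because $|c_1| \leq 1$. For the second, writing $|\Gamma_2| = (B_1/4)|c_2 + \mu c_1^2|$ with $\mu = (B_2 - 2B_1^2)/B_1$, I would invoke the sharp Schwarz-function estimate $|c_2 + \mu c_1^2| \leq \max(1,|\mu|)$, immediate from the Schur parametrization $c_2 = (1-|c_1|^2)\gamma$, $|\gamma|\leq 1$, via the triangle inequality. The hypothesis $|B_2 - 2B_1^2| \geq B_1$ is exactly $|\mu|\geq 1$, so the bound becomes $|\mu|$, yielding $|\Gamma_2|^2 \leq (2B_1^2 - B_2)^2/16$, and adding the two estimates gives the claimed inequality.

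For sharpness I would take $\omega(z) = iz$, i.e., $c_1 = i$, $c_2 = 0$; the corresponding extremal is determined by $zf'(z)/f(z) = \varphi(iz)$. With this choice $\Gamma_1^2 = -B_1^2/4$ is negative real while $\Gamma_2^2 = (B_2 - 2B_1^2)^2/16$ is positive real (using $B_1 > 0$ and the reality of the Taylor coefficients of $\varphi$), so $\Gamma_1^2$ and $\Gamma_2^2$ lie on opposite rays, the triangle inequality becomes an equality, and $|T_{2,1}(F_{f^{-1}})|$ attains the value $B_1^2/4 + (2B_1^2 - B_2)^2/16$ exactly.

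The most delicate point is this phase selection. The naive guess $\omega(z) = z$ only delivers $|B_1^2/4 - (B_2 - 2B_1^2)^2/16|$, strictly less than the proposed maximum, so one must notice that multiplying $c_1$ by $i$ flips the sign of $\Gamma_1^2$ while preserving that of $\Gamma_2^2$, converting their difference into a genuine sum of moduli. If one prefers to avoid quoting the $\max(1,|\mu|)$ lemma, the same conclusion can be obtained by parametrizing $c_2 = (1-|c_1|^2)\gamma$, optimizing over phases, and checking by a short differentiation that the hypothesis forces the resulting one-variable upper bound in $|c_1|^2 \in [0,1]$ to be monotone increasing, so its maximum is attained at $|c_1| = 1$.
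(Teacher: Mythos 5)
Your proposal is correct and follows essentially the same route as the paper: split $|\Gamma_1^2-\Gamma_2^2|\le|\Gamma_1|^2+|\Gamma_2|^2$, bound $|\Gamma_1|$ by $B_1/2$ via $|c_1|\le 1$, bound $|\Gamma_2|$ by $\tfrac14|B_2-2B_1^2|$ using the hypothesis $|B_2-2B_1^2|\ge B_1$, and realize equality with the rotation $\omega(z)=iz$, i.e.\ the function $f_\varphi$ with $zf_\varphi'/f_\varphi=\varphi(iz)$. The only difference is that where the paper quotes the Fekete--Szeg\H{o} inequality of Ali--Ravichandran--Seenivasagan for $|a_3-\lambda a_2^2|$, you rederive the needed case directly from $|c_2|\le 1-|c_1|^2$, which is a harmless (and more self-contained) substitute.
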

\begin{proof}
      Let $f \in \mathcal{S}^*(\varphi)$ be of the form (\ref{a2a3a4}), then by the definition of subordination, there exists a Schwarz function $\omega(z)=\sum_{n=1}^\infty c_n z^n \in \mathcal{B}_0$, such that
    $$ \frac{z f'(z)}{f(z)}= \varphi(\omega(z)).$$
    %where $\omega(z)= \sum_{n=1}^\infty c_n z^n \in  is a Schwarz function.
    Using the series expansions of $f$, $\varphi$ and $\omega$, and by comparing the coefficients of same powers of $z$, we get
\begin{equation}\label{a2a3a4Sphi}
\begin{aligned}
\left.
\begin{array}{ll}
    a_2 &= B_1 c_1 \\ \\
    a_3 &= \dfrac{1}{2}((B_1^2 + B_2) c_1^2 +B_1 c_2)  \\ \ \\
    a_4 &= \dfrac{1}{6} ((B_1^3 + 3 B_1 B_2 + 2 B_3 )c_1^3  +  (3 B_1^2 + 4 B_2 )c_1 c_2 + 2 B_1 c_3 ).
\end{array}
\right\}
\end{aligned}
\end{equation}
    In view of the bounds $\vert c_1 \vert\leq 1$, we have
\begin{equation}\label{Sa2}
     \vert a_2 \vert \leq B_1.
\end{equation}
    As a direct consequence of \cite[Theorem 1]{AliRavSee}, we get the following estimate for $f\in \mathcal{S}^*(\varphi)$:
\begin{equation}\label{FSSPHI}
\begin{aligned}
\vert  a_3 - \lambda a_2^2 \vert \leq
\left\{
\begin{array}{ll}
    \dfrac{ B_1^2 + B_2 - 2 \lambda B_1^2 }{2}, & 2 \lambda B_1^2 \leq B_1^2 + B_2 - B_1; \\
    \dfrac{B_1}{2},                      &  B_1^2 + B_2 - B_1 \leq 2 \lambda B_1^2    \leq  B_1^2 + B_2 + B_1 ; \\
    \dfrac{ 2 \lambda B_1^2 - B_1^2 - B_2  }{2} ,  &   2 \lambda B_1^2 \geq B_1^2 + B_2 + B_1.
\end{array}
\right.
\end{aligned}
\end{equation}
   Since the condition $\vert B_2 -2 B_1^2\vert \geq B_1$ holds, it follows that
\begin{equation}\label{Gamma2S}
   \vert \Gamma_2 \vert =  \left\vert a_3 - \frac{3}{2} a_2^2 \right\vert \leq \frac{1}{2}\left\vert B_2 - 2 B_1^2 \right\vert.
\end{equation}
   From~(\ref{G2G3G4}) and (\ref{ToeplG2G3}), we deduce that
\begin{equation}\label{T21fin}
\begin{aligned}
\left.
\begin{array}{ll}
    \vert T_{2,1}(F_{f^{-1}})\vert=  \vert \Gamma_1^2 -\Gamma_2^2 \vert %& = \bigg \vert  \frac{a_2^2}{4} - \bigg( -\frac{1}{2} a_3+ \frac{3}{4} a_2^2 \bigg)^2 \bigg\vert \\
    &=  \bigg \vert  \dfrac{a_2^2}{4} - \dfrac{1}{4}\bigg(  a_3 - \dfrac{3}{2} a_2^2 \bigg)^2 \bigg\vert  \\
        &\leq  \dfrac{\vert a_2\vert^2}{4} + \dfrac{1}{4}\bigg\vert  a_3 - \dfrac{3}{2} a_2^2 \bigg\vert^2.
\end{array}
\right\}
\end{aligned}
\end{equation}
    Applying the estimates given in~(\ref{Sa2}) and~(\ref{Gamma2S}) to this inequality, we get the required bound for $\vert T_{2,1}(F_{f^{-1}})$ when $f\in \mathcal{S}^*(\varphi)$.
%   Since $\vert a_2 \vert \leq B_1$. Thus using these estimates, we get
 %  $$ \vert \Gamma_1^2 -\Gamma_2^2 \vert  \leq \frac{B_1^2}{4} + \frac{1}{16} ( B_2 - 2 B_1^2)^2.$$

  Equality case holds for the function $f_{\varphi}(z)\in \mathcal{S}^*(\varphi)$, given by
\begin{equation}\label{ExtSphi}
    f_{\varphi}(z) = z \exp\int_{0}^z \frac{\varphi(i t)-1}{t} dt = z+ i B_1 z^2 - \frac{1}{2}(B_1^2 + B_2) z^2+\cdots.
\end{equation}
   A simple caluclation reveals that for the function $f_\varphi$, we have
   $$\Gamma_1= -\frac{i B_1}{2}, \;\;  \Gamma_2= \frac{B_2 - 2 B_1^2}{4} $$
   and
   $$\left\vert \Gamma_1^2 - \Gamma_2^2 \right\vert= \frac{B_1^2}{4} + \frac{(B_2 - 2 B_1^2)^2}{16},$$
   which shows the sharpness of the bound.
\end{proof}

\begin{theorem}\label{thm2}
    Let $f\in \mathcal{C}(\varphi)$ and $\varphi(z)= 1+ B_1 z+ B_2 z^2+ B_3 z^3+ \cdots$. If $ \vert B_2 - \frac{5}{4} B_1^2\vert \geq B_1 $ holds, then
    $$ \vert T_{2,1}(F_{f^{-1}}/2) \vert \leq \frac{B_1^2}{16} +\frac{1}{144} \bigg( B_2 - \frac{5}{4} B_1^2 \bigg)^2.$$
    The inequality is sharp.
\end{theorem}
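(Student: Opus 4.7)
The plan is to mirror the strategy of Theorem \ref{thm1}, adapted to the convex class. I would start from the subordination $1 + zf''(z)/f'(z) = \varphi(\omega(z))$ with $\omega(z) = \sum_{n\geq 1} c_n z^n \in \mathcal{B}_0$, and by equating Taylor coefficients obtain the convex analogs of the relations in (\ref{a2a3a4Sphi}). Concretely, I expect
\[
a_2 = \tfrac{1}{2} B_1 c_1, \qquad a_3 = \tfrac{1}{6}\bigl((B_1^2+B_2)c_1^2 + B_1 c_2\bigr),
\]
which also follow from $zf'(z)\in\mathcal{S}^*(\varphi)$ by reading off the factor-of-$n$ relation $na_n = A_n$ in (\ref{a2a3a4Sphi}).

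Next I would compute $\Gamma_1$ and $\Gamma_2$ via (\ref{G2G3G4}). A short algebraic simplification, in which the $c_1^2$ coefficients combine as $\tfrac{B_1^2+B_2}{6} - \tfrac{3B_1^2}{8} = \tfrac{1}{6}(B_2 - \tfrac{5}{4}B_1^2)$, gives
\[
\Gamma_1 = -\tfrac{1}{4} B_1 c_1, \qquad \Gamma_2 = -\tfrac{1}{12}\bigl((B_2 - \tfrac{5}{4}B_1^2)c_1^2 + B_1 c_2\bigr).
\]
The bound $|c_1|\leq 1$ yields $|\Gamma_1|\leq B_1/4$. For $\Gamma_2$, I would write it as $-\tfrac{B_1}{12}(c_2 + \lambda c_1^2)$ with $\lambda = (B_2 - \tfrac{5}{4}B_1^2)/B_1$ and invoke the classical estimate $|c_2 + \lambda c_1^2|\leq \max\{1,|\lambda|\}$; the hypothesis $|B_2 - \tfrac{5}{4}B_1^2|\geq B_1$ forces $|\lambda|\geq 1$, so $|\Gamma_2|\leq \tfrac{1}{12}|B_2 - \tfrac{5}{4}B_1^2|$. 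Applying the triangle inequality $|\Gamma_1^2-\Gamma_2^2|\leq |\Gamma_1|^2+|\Gamma_2|^2$ then produces the claimed bound.

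For sharpness I would test the convex Ma-Minda analog of (\ref{ExtSphi}), namely $g_\varphi$ determined by $1 + zg_\varphi''(z)/g_\varphi'(z) = \varphi(iz)$, equivalently
\[
g_\varphi'(z) = \exp\int_0^z \frac{\varphi(it)-1}{t}\,dt.
\]
A direct expansion yields $a_2 = iB_1/2$ and $a_3 = -(B_1^2+B_2)/6$, whence $\Gamma_1^2 = -B_1^2/16$ and $\Gamma_2^2 = (B_2 - \tfrac{5}{4}B_1^2)^2/144$. Because these two squares share a sign, the moduli add in $|\Gamma_1^2 - \Gamma_2^2|$ and the bound is attained.

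The only substantive obstacle I anticipate is the bookkeeping in the second step: checking that the $c_1^2$ contributions collapse exactly to the coefficient $B_2 - \tfrac{5}{4}B_1^2$ with normalizing factor $1/12$, since any arithmetic slip there shifts the hypothesis threshold and the final constants. Once the expression for $\Gamma_2$ is in the Fekete-Szegő form, the remainder is a direct replication of the technique used for Theorem \ref{thm1}.
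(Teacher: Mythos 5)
Your proposal is correct and follows essentially the same route as the paper: the same coefficient relations for $a_2,a_3$ in terms of the Schwarz coefficients, the bound $|\Gamma_1^2-\Gamma_2^2|\le|\Gamma_1|^2+|\Gamma_2|^2$, and the same extremal function $h_\varphi$ defined by $1+zh_\varphi''(z)/h_\varphi'(z)=\varphi(iz)$. The only cosmetic difference is that the paper obtains the estimate $|a_3-\tfrac{3}{2}a_2^2|\le\tfrac{1}{6}|B_2-\tfrac{5}{4}B_1^2|$ by citing the Ma--Minda Fekete--Szeg\H{o} theorem for $\mathcal{C}(\varphi)$, whereas you derive it directly from $|c_2+\lambda c_1^2|\le\max\{1,|\lambda|\}$ --- the same underlying inequality, so the arguments coincide in substance.
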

\begin{proof}
    Since $f\in \mathcal{C}(\varphi)$, we have
    $$ 1 + \frac{z f''(z)}{f'(z)}= \varphi(\omega(z)),$$
    where $\omega(z)= \sum_{n=1}^\infty\in \mathcal{B}_0$. The series expansions of $f$, $\varphi$ and $\omega$ together with the comparison of same powers of $z$ yield
\begin{equation}\label{a2a3a4Cphi}
\begin{aligned}
\left.
\begin{array}{ll}
    a_2 &= \dfrac{B_1 c_1}{2}, \\ \\
    a_3 &= \dfrac{1}{6}((B_1^2 + B_2) c_1^2 + B_1 c_2),\\ \\
    a_4 &= \dfrac{1}{24} ((B_1^3 + 3 B_1 B_2 + 2 B_3) c_1^3 + (3 B_1^2 + 4 B_2) c_1 c_2 +  2 B_1 c_3 ).
\end{array}
\right\}
\end{aligned}
\end{equation}
  Again by applying the bound $\vert c_1 \vert \leq 1$, we get
\begin{equation}\label{a2Cphi}
   \vert a_2 \vert \leq \frac{B_1}{2}.
\end{equation}
  For $f\in \mathcal{C}(\varphi)$, Ma and Minda~\cite[Theorem 3]{MaMin} established that
\begin{equation}\label{FSCPHI}
\begin{aligned}
\vert a_3 - \lambda a_2^2 \vert \leq
\left\{
\begin{array}{ll}
    \dfrac{( B_2 -\frac{3}{2} \lambda B_1^2 + B_1^2  )}{6}, & 3 \lambda B_1^2 \leq 2 (B_1^2 + B_2 - B_1); \\ \\
    \dfrac{B_1}{6},                      &   2 (B_1^2 + B_2 - B_1) \leq 3 \lambda B_1^2    \leq  2( B_1^2 + B_2 + B_1) ; \\ \\
    \dfrac{(- B_2  + \frac{3}{2} \lambda B_1^2  - B_1^2 )}{6} ,  &   2( B_1^2 + B_2 + B_1) \leq 3 \lambda B_1^2.
\end{array}
\right.
\end{aligned}
\end{equation}
   As by the hypothesis $\vert B_2 - \frac{5}{4} B_1^2\vert \geq B_1$ holds, it follows that
\begin{equation}\label{Gamma2C}
   \vert \Gamma_2\vert = \bigg\vert a_3 - \frac{3}{2} a_2^2 \bigg\vert \leq \frac{1}{6} \bigg\vert B_2 - \frac{5}{4} B_1^2 \bigg\vert.
\end{equation}
   Using the estimates from (\ref{a2Cphi}) and (\ref{Gamma2C}) in (\ref{T21fin}), we obtain
\begin{align*}
    \vert T_{2,1}(F_{f^{-1}}) \vert & \leq  \frac{B_1^2}{16} +\frac{1}{144} \bigg( B_2 - \frac{5}{4} B_1^2 \bigg)^2.
\end{align*}
   A straightforward calculation shows that for the function $h_{\varphi}\in \mathcal{C}(\varphi)$, defined by
\begin{equation}\label{ExtCphi}
    1 + \frac{z h_\varphi''(z)}{h_{\varphi}'(z)} = \varphi(i z),
\end{equation}
  equality case holds, establishing the sharpness of the bound.
\end{proof}

\begin{theorem}
   Let $f\in \mathcal{S}^*(\varphi)$ and \small{$\varphi(z) = 1+ B_1 z+ B_2 z^2 + B_3 z^3 +\cdots.$} If \small{$ \vert B_2 - 2 B_1^2 \vert \geq B_1 $} and $(\sigma_1, \mu_1) \in \cup_{i=1}^3 \Omega_i$ hold, then
   $$ \vert T_{2,2}(F_{f^{-1}}) \vert \leq \frac{(B_2 - 2 B_1^2)^2 }{16}   + \frac{ (9 B_1^3 - 9 B_1  B_2 +2 B_3 )^2 }{144},$$
   %$$ \vert T_{2,2}(F_{f^{-1}}/2) \vert \leq \frac{ \big( (9 B_1^3-9 B_1 B_2+2 B_3 )^2+9 (B_2-2 B_1^2 )^2\big)}{144}. $$
   where
   $$ \sigma_1= -\frac{(9 B_1^2-4 B_2 )}{2 B_1} \;\;\; \text{and} \;\;\; \mu_1 = \frac{ (9 B_1^3 - 9 B_1  B_2 +2 B_3 )}{2 B_1}. $$
   The bound is sharp.
\end{theorem}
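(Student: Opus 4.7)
The plan is to mirror the proof of Theorem~\ref{thm1}. From~(\ref{ToeplG2G3}) we have the triangle-type estimate
\begin{equation*}
\vert T_{2,2}(F_{f^{-1}}) \vert = \vert \Gamma_2^2 - \Gamma_3^2 \vert \leq \vert \Gamma_2 \vert^2 + \vert \Gamma_3 \vert^2,
\end{equation*}
so the task reduces to bounding $\vert \Gamma_2 \vert$ and $\vert \Gamma_3 \vert$ individually, using~(\ref{G2G3G4}) together with the Schwarz-coefficient expressions~(\ref{a2a3a4Sphi}). The bound on $\vert \Gamma_2 \vert$ is already at hand: under the hypothesis $\vert B_2 - 2 B_1^2 \vert \geq B_1$, specialising the Fekete--Szeg\H{o} estimate~(\ref{FSSPHI}) to $\lambda = 3/2$ puts us in one of its two extremal branches and, exactly as in Theorem~\ref{thm1}, contributes the term $(B_2 - 2 B_1^2)^2/16$ to the final bound.

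The main work lies in $\vert \Gamma_3 \vert$. Substituting~(\ref{a2a3a4Sphi}) into $\Gamma_3 = -\tfrac{1}{2}(a_4 - 4 a_2 a_3 + \tfrac{10}{3} a_2^3)$ and grouping the resulting polynomial by the monomials $c_3$, $c_1 c_2$, $c_1^3$, one expects the coefficient of $c_3$ to simplify to $B_1/3$. Factoring this out then yields
\begin{equation*}
\Gamma_3 = -\frac{B_1}{6}\bigl(c_3 + \sigma_1 c_1 c_2 + \mu_1 c_1^3\bigr),
\end{equation*}
with $\sigma_1$ and $\mu_1$ equal to the quantities announced in the statement. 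The hypothesis $(\sigma_1, \mu_1) \in \bigcup_{i=1}^3 \Omega_i$ therefore permits a direct application of Lemma~\ref{Lemma1}, delivering $\vert \Gamma_3 \vert \leq (B_1/6)\vert \mu_1 \vert = \vert 9 B_1^3 - 9 B_1 B_2 + 2 B_3 \vert/12$, and hence $\vert \Gamma_3 \vert^2 \leq (9 B_1^3 - 9 B_1 B_2 + 2 B_3)^2/144$. Adding the two contributions gives the asserted inequality.

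For sharpness, the natural candidate is the rotated Ma--Minda extremal $f_\varphi$ from~(\ref{ExtSphi}), whose underlying Schwarz function is $\omega(z) = i z$, so that $c_1 = i$ and $c_2 = c_3 = 0$. A short direct calculation then shows that $\Gamma_2$ is real while $\Gamma_3$ is purely imaginary, which promotes the triangle inequality to an equality $\vert \Gamma_2^2 - \Gamma_3^2 \vert = \vert \Gamma_2 \vert^2 + \vert \Gamma_3 \vert^2$, and each factor attains its individual bound simultaneously. The main obstacle of the whole argument is the algebraic reduction of $\Gamma_3$ to the canonical Prokhorov--Szynal shape $c_3 + \sigma c_1 c_2 + \mu c_1^3$: one must verify that the $c_3$-coefficient of $a_4 - 4 a_2 a_3 + \tfrac{10}{3} a_2^3$ is a pure multiple of $B_1$ (with all $B_2$- and $B_3$-contributions to that particular coefficient cancelling), so that the division extracting $\sigma_1, \mu_1$ is legitimate and produces precisely the values in the statement; once this identification is secured, Lemma~\ref{Lemma1} does the remaining work.
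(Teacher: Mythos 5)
Your proposal is correct and follows essentially the same route as the paper: the triangle inequality $\vert T_{2,2}(F_{f^{-1}})\vert \leq \vert\Gamma_2\vert^2+\vert\Gamma_3\vert^2$, the Fekete--Szeg\H{o} bound~(\ref{FSSPHI}) with $\lambda=3/2$ for $\Gamma_2$, the reduction of $\Gamma_3$ to the Prokhorov--Szynal form $c_3+\sigma_1 c_1c_2+\mu_1 c_1^3$ with the stated $\sigma_1,\mu_1$ followed by Lemma~\ref{Lemma1}, and sharpness via $f_\varphi$ from~(\ref{ExtSphi}). Your explicit verification that $\Gamma_2$ is real and $\Gamma_3$ purely imaginary for the extremal function is a welcome detail the paper leaves implicit.
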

\begin{proof}
    Let $f \in \mathcal{S}^*(\varphi)$ be of the form (\ref{a2a3a4}), then from (\ref{a2a3a4Sphi}), we deduce that
\begin{align*}
  \bigg\vert  -\frac{1}{2} a_4 + 2 a_2 a_3 - \frac{5}{3} a_2^3 \bigg\vert &=\frac{ B_1}{6} \vert c_3  +\sigma_1 c_1 c_2  + \mu_1 c_1^3 \vert,
\end{align*}
   where
    $$ \sigma_1= -\frac{(9 B_1^2-4 B_2 )}{2 B_1} \;\;\; \text{and} \;\;\; \mu_1 = \frac{ (9 B_1^3 - 9 B_1  B_2 +2 B_3 )}{2 B_1}. $$
  Since $(\sigma_1, \mu_1)$ belongs to either $\Omega_1$ or $\Omega_2$ or $\Omega_3$, from Lemma \ref{Lemma1} and (\ref{G2G3G4}), we get
\begin{equation}\label{G3Sphi}
     \vert \Gamma_3 \vert = \bigg\vert  -\frac{1}{2} a_4 + 2 a_2 a_3 - \frac{5}{3} a_2^3 \bigg\vert \leq  \frac{ \vert 9 B_1^3 - 9 B_1  B_2 +2 B_3 \vert }{12}.
\end{equation}
   Using the estimates of $\Gamma_2$ and $\Gamma_3$ from (\ref{Gamma2S}) and (\ref{G3Sphi}), respectively, in (\ref{ToeplG2G3}) together with triangular inequality, we obtain
\begin{align*}
         \vert T_{2,2}(F_{f^{-1}})\vert %&= \vert \Gamma_2^2 - \Gamma_3^3 \vert \\
                                          &\leq \vert \Gamma_2 \vert^2 + \vert \Gamma_3 \vert^2  \\
                                         & \leq \frac{1}{16}(B_2 - 2 B_1^2)^2    + \frac{ (9 B_1^3 - 9 B_1  B_2 +2 B_3 )^2 }{144}.
\end{align*}

   It is a simple exercise to check that the sharpness of this bound follows from the function $f_\varphi$ given by (\ref{ExtSphi}).
\end{proof}
\begin{theorem}
   Let $f \in \mathcal{K}(\varphi)$ and $\varphi(z) = 1+ B_1 z+ B_2 z^2 + B_3 z^3 \cdots$. If $ \vert B_2 - \frac{5}{4} B_1^2\vert \geq B_1 $ and $(\sigma_2, \mu_2) \in \cup_{i=2}^3 \Omega_i$ hold, then
\begin{equation}\label{T22Cphi}
     \vert T_{2,2}(F_{f^{-1}}/2) \vert \leq \frac{1}{144} \bigg( B_2 - \frac{5}{4} B_1^2 \bigg)^2 + \frac{ ( 3 B_1^3-5 B_1 B_2+2 B_3 )^2}{2304},
\end{equation}
    where
    $$ \sigma_2 = -\frac{ (5 B_1^2-4 B_2 )}{2 B_1} \;\; \text{and} \;\; \mu_2  =\frac{ (3 B_1^3-5 B_1 B_2+2 B_3)}{2 B_1}.$$
    The estimate is sharp.
\end{theorem}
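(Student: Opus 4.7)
The plan is to mirror the argument used for the previous theorem on $\mathcal{S}^{*}(\varphi)$, adapted to the convex coefficients~(\ref{a2a3a4Cphi}). Since $T_{2,2}(F_{f^{-1}}/2) = \Gamma_{2}^{2} - \Gamma_{3}^{2}$, the triangle inequality reduces everything to estimating $|\Gamma_{2}|$ and $|\Gamma_{3}|$ separately and adding the squares.

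The first summand is essentially already in hand. As in the proof of Theorem~\ref{thm2}, applying the Ma--Minda estimate~(\ref{FSCPHI}) with $\lambda = 3/2$ under the hypothesis $|B_{2} - \tfrac{5}{4}B_{1}^{2}| \geq B_{1}$ gives $|\Gamma_{2}| \leq \tfrac{1}{12}|B_{2} - \tfrac{5}{4}B_{1}^{2}|$, whose square is precisely the first term $\tfrac{1}{144}(B_{2} - \tfrac{5}{4}B_{1}^{2})^{2}$ of the claimed bound.

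For $|\Gamma_{3}|$, I would substitute the expressions for $a_{2}, a_{3}, a_{4}$ from~(\ref{a2a3a4Cphi}) into $\Gamma_{3} = -\tfrac{1}{2}(a_{4} - 4 a_{2} a_{3} + \tfrac{10}{3} a_{2}^{3})$ and collect the coefficients of $c_{3}$, $c_{1}c_{2}$, and $c_{1}^{3}$ separately. After cancellation the expression should take the form
$$\Gamma_{3} = -\frac{B_{1}}{24}\bigl(c_{3} + \sigma_{2}\, c_{1} c_{2} + \mu_{2}\, c_{1}^{3}\bigr),$$
with exactly the $\sigma_{2}$ and $\mu_{2}$ stated in the theorem. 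The hypothesis $|B_{2} - \tfrac{5}{4}B_{1}^{2}| \geq B_{1}$ is equivalent to $|\sigma_{2}| \geq 2$, which excludes the region $\Omega_{1}$ and explains why only $(\sigma_{2}, \mu_{2}) \in \Omega_{2} \cup \Omega_{3}$ is required. Lemma~\ref{Lemma1} then yields $|\Gamma_{3}| \leq \tfrac{B_{1}|\mu_{2}|}{24} = \tfrac{|3 B_{1}^{3} - 5 B_{1} B_{2} + 2 B_{3}|}{48}$, whose square gives the second summand $\tfrac{(3 B_{1}^{3} - 5 B_{1} B_{2} + 2 B_{3})^{2}}{2304}$.

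For sharpness, I would test the extremal function $h_{\varphi}$ defined by~(\ref{ExtCphi}), which corresponds to the Schwarz function $\omega(z) = iz$, so that $c_{1} = i$ and $c_{2} = c_{3} = 0$. This choice simultaneously saturates the Ma--Minda bound for $\Gamma_{2}$ and forces $|c_{3} + \sigma_{2} c_{1} c_{2} + \mu_{2} c_{1}^{3}| = |\mu_{2}|$, saturating Lemma~\ref{Lemma1} as well. The main technical obstacle I foresee is the careful bookkeeping in the expansion of $\Gamma_{3}$, where the contributions from $a_{4}$, $-4 a_{2}a_{3}$, and $\tfrac{10}{3}a_{2}^{3}$ must collapse cleanly into the compact coefficients $\sigma_{2}$ and $\mu_{2}$; once that algebra is done, the remainder of the argument is a direct structural analogue of the starlike case.
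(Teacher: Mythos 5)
Your proposal is correct and follows essentially the same route as the paper: the bound $\vert\Gamma_2\vert\le\tfrac{1}{12}\vert B_2-\tfrac54 B_1^2\vert$ from the Ma--Minda Fekete--Szeg\H{o} estimate with $\lambda=3/2$, the reduction of $\Gamma_3=-\tfrac{B_1}{24}(c_3+\sigma_2c_1c_2+\mu_2c_1^3)$ to Lemma~\ref{Lemma1}, the triangle inequality on $\Gamma_2^2-\Gamma_3^2$, and sharpness via $h_\varphi$ with $\omega(z)=iz$ all match the paper's argument (your explicit factor $\tfrac12$ in $\vert\Gamma_2\vert$ is in fact cleaner than the paper's display~(\ref{Gamma2C}), which omits it but still arrives at the consistent $\tfrac{1}{144}$ coefficient). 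Your added observation that $\vert B_2-\tfrac54 B_1^2\vert\ge B_1$ is equivalent to $\vert\sigma_2\vert\ge 2$, explaining why only $\Omega_2\cup\Omega_3$ is assumed, is a correct and worthwhile clarification not made explicit in the paper.
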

\begin{proof}
    For $f(z)=z + \sum_{n=2}^\infty a_n z^n \in \mathcal{K}(\varphi)$, applying the expressions of $a_2$, $a_3$ and $a_4$ from (\ref{a2a3a4Cphi}) yields
\begin{align*}
 \bigg\vert -\frac{1}{2} a_4 + 2 a_2 a_3 - \frac{5}{3} a_2^3\bigg\vert &= \frac{ B_1}{24}  \big\vert c_3 + \sigma_2 c_1 c_2 + \mu_2 c_1^3  \big\vert,
\end{align*}
    where
     $$ \sigma_2 =-\frac{ (5 B_1^2-4 B_2 )}{2 B_1} \;\; \text{and} \;\; \mu_2  =\frac{ (3 B_1^3-5 B_1 B_2+2 B_3)}{2 B_1}. $$
     From Lemma \ref{Lemma1} and (\ref{G2G3G4}), we get
\begin{equation}\label{G3Cphi}
   \vert \Gamma_3 \vert = \bigg\vert -\frac{1}{2} a_4 + 2 a_2 a_3 - \frac{5}{3} a_2^3\bigg\vert \leq \frac{ \vert 3 B_1^3-5 B_1 B_2+2 B_3 \vert}{48}.
\end{equation}
   Using the estimates of $\vert\Gamma_2\vert$ and $\vert \Gamma_3 \vert$ from (\ref{Gamma2C}) and (\ref{G3Cphi}), respectively, in (\ref{ToeplG2G3}), we obtain
\begin{align*}
     \vert T_{2,2}(F_{f^{-1}}) \vert &\leq \vert \Gamma_2 \vert^2 + \vert \Gamma_3 \vert^2  \\
                                         &\leq  \frac{1}{144} \bigg( B_2 - \frac{5}{4} B_1^2 \bigg)^2 + \frac{ ( 3 B_1^3-5 B_1 B_2+2 B_3 )^2}{2304}.
\end{align*}
    For the function $h_\varphi\in \mathcal{C}(\varphi)$ defined in (\ref{ExtCphi}), we have
     $$ \Gamma_2 =\frac{(4 B_2-5 B_1^2)}{48} , \;\; \Gamma_3 = \frac{i (3 B_1^3 - 5 B_1  B_2 +2 B_3 )}{48}. $$
   A direct calculation shows that the equality in (\ref{T22Cphi}) is attained for the function $h_\varphi$, showing the sharpness of the bound.
\end{proof}

\section{For coefficients of inverse function}
    In this section, we determine the sharp bounds for the second-order Toeplitz determinants of the coefficients of inverse functions, namely $\vert T_{2,1}(f^{-1})\vert$ and  $ \vert T_{2,2}(f^{-1})\vert$, in the classes $\mathcal{S}^*(\varphi)$ and $\mathcal{C}(\varphi)$.
 %for $\vert T_{2,1}(f^{-1})\vert$ and  $ \vert T_{2,2}(f^{-1})\vert$ for the classes $\mathcal{S}^*(\varphi)$ and $\mathcal{C}(\varphi)$.
\begin{theorem}
   Let $f\in \mathcal{S}^*(\varphi)$ and $\varphi(z)= 1 +B_1 z +B_2 z^2 +\cdots$. If $B_1 \leq \vert 3 B_1^2 - B_2 \vert$ holds, then
   $$ \vert T_{2,1}(f^{-1})\vert \leq B_1^2 + \frac{(3 B_1^2 - B_2)^2}{4}. $$
   The estimate is sharp.
\end{theorem}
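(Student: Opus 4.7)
The plan is to reduce $T_{2,1}(f^{-1})$ to a simple combination of $a_2$ and the Fekete--Szeg\H{o} functional $a_3-2a_2^2$, and then apply the bounds already recalled in the previous theorems. Using the inversion formulas in~(\ref{b2b3b4}), one has $b_2=-a_2$ and $b_3-2a_2^2=-(a_3-2a_2^2)$, so that
\begin{equation*}
T_{2,1}(f^{-1})=b_2^2-b_3^2=a_2^2-(a_3-2a_2^2)^2,
\end{equation*}
and the triangle inequality yields $|T_{2,1}(f^{-1})|\le |a_2|^2+|a_3-2a_2^2|^2$. Thus the problem reduces entirely to bounding these two pieces, both of which are already addressed for $\mathcal{S}^*(\varphi)$.

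The first piece is immediate from~(\ref{Sa2}), giving $|a_2|^2\le B_1^2$. For the second piece, I would invoke the Ma--Minda Fekete--Szeg\H{o} estimate~(\ref{FSSPHI}) with $\lambda=2$. Then $2\lambda B_1^2-B_1^2-B_2=3B_1^2-B_2$, and the three branches of~(\ref{FSSPHI}) are separated precisely by whether $|3B_1^2-B_2|$ is above or below $B_1$. The standing hypothesis $B_1\le|3B_1^2-B_2|$ places us in either the first or third branch (never the middle one), and both outer branches give exactly
\begin{equation*}
\left|a_3-2a_2^2\right|\le\frac{|3B_1^2-B_2|}{2}.
\end{equation*}
Squaring and substituting back then produces the claimed bound $|T_{2,1}(f^{-1})|\le B_1^2+(3B_1^2-B_2)^2/4$. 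The only delicate step is verifying that the hypothesis really does exclude the middle branch of~(\ref{FSSPHI}); that is the main (and essentially only) obstacle, but it follows directly by rearranging the inequalities defining the three $\lambda$-ranges.

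For sharpness, I would use the same Ma--Minda extremal $f_\varphi$ appearing in~(\ref{ExtSphi}), which gives $a_2=iB_1$ and $a_3=-(B_1^2+B_2)/2$. A short computation yields $b_2^2=-B_1^2$ and $b_3=-(3B_1^2-B_2)/2$, so that
\begin{equation*}
\left|T_{2,1}(f_\varphi^{-1})\right|=\left|-B_1^2-\tfrac{(3B_1^2-B_2)^2}{4}\right|=B_1^2+\tfrac{(3B_1^2-B_2)^2}{4},
\end{equation*}
because the two terms have the same sign. This matches the upper bound, so equality is attained and the estimate is sharp.
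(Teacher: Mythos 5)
Your proposal is correct and follows essentially the same route as the paper: bound $|b_2|=|a_2|$ by $B_1$, bound $|b_3|=|a_3-2a_2^2|$ via the Fekete--Szeg\H{o} estimate (\ref{FSSPHI}) with $\lambda=2$ (where the hypothesis $B_1\le|3B_1^2-B_2|$ forces one of the two outer branches, each giving $|3B_1^2-B_2|/2$), apply the triangle inequality to $b_2^2-b_3^2$, and verify sharpness with $f_\varphi$ from (\ref{ExtSphi}). Your sharpness computation even makes explicit the sign alignment of $b_2^2$ and $-b_3^2$ that the paper leaves implicit.
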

\begin{proof}
    Let $f\in \mathcal{S}^*(\varphi)$ be of the form (\ref{a2a3a4}). Using the estimate of $\vert a_2\vert$ from (\ref{Sa2}) in (\ref{b2b3b4}), we obtain
\begin{equation}\label{b2Sphi}
    \vert b_2 \vert \leq B_1.
\end{equation}
   Since $\vert 3 B_1^2 - B_2 \vert \geq B_1$, by (\ref{b2b3b4}) and (\ref{FSSPHI}), it follows that
\begin{equation}\label{b3Sphi}
    \vert b_3\vert = \vert a_3 - 2 a_2^2 \vert \leq \frac{\vert 3 B_1^2 - B_2 \vert}{2}.
\end{equation}
   Using the bounds from (\ref{b2Sphi}) and (\ref{b3Sphi}) in (\ref{Toeplb2b3}), we deduce that
\begin{align*}
    \vert T_{2,1}(f^{-1})\vert \leq \vert b_2 \vert^2 +\vert b_3 \vert^2 \leq  B_1^2 +\frac{(3 B_1^2 - B_2)^2}{4}.
\end{align*}
     This inequality becomes equality for the function $f_\varphi\in \mathcal{S}^*(\varphi)$ given by (\ref{ExtSphi}), as for $f_\varphi$, we have
     $$  b_2 = -i B_1,  \;\; b_3 = \frac{B_2 - 3B_1^2}{2}, $$
    which shows that the bound is sharp and completes the proof.
\end{proof}
\begin{theorem}
   Let $f\in \mathcal{C}(\varphi)$ and $\varphi(z)=1+ B_1 z +B_2 z^2 +\cdots$. If $B_1 \leq \vert 2 B_1^2 - B_2 \vert$ holds, then
   $$ \vert T_{2,1}(f^{-1})\vert \leq \frac{B_1^2}{4}+ \frac{(2B_1^2 - B_2)^2}{36}. $$
   The estimate is sharp.
\end{theorem}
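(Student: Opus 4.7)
The plan is to follow the same template used in the preceding theorem for $\mathcal{S}^*(\varphi)$, now adapted to the convex case $\mathcal{C}(\varphi)$ via the coefficient expressions in~(\ref{a2a3a4Cphi}) and the Ma--Minda Fekete--Szeg\H{o} inequality~(\ref{FSCPHI}). From~(\ref{b2b3b4}) we have $b_2=-a_2$ and $b_3=2a_2^2-a_3$, so the problem reduces to estimating $|a_2|$ and the Fekete--Szeg\H{o} functional $|a_3-2a_2^2|$ for $f\in\mathcal{C}(\varphi)$.

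First I would invoke~(\ref{a2Cphi}) to obtain $|b_2|=|a_2|\leq B_1/2$. The more interesting step is bounding $|b_3|=|a_3-2a_2^2|$, which is exactly the Ma--Minda functional in~(\ref{FSCPHI}) at $\lambda=2$. With $\lambda=2$, the middle case of~(\ref{FSCPHI}) requires $|2B_1^2-B_2|\leq B_1$; the hypothesis $B_1\leq|2B_1^2-B_2|$ pushes us into either the first or the third branch. A short check shows that in both branches the bound collapses to
\[
|b_3|=|a_3-2a_2^2|\leq \frac{|2B_1^2-B_2|}{6}.
\]
This sign-case analysis is the only moderately delicate step, but it is a direct algebraic verification.

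Next, from~(\ref{Toeplb2b3}) we use the triangle inequality
\[
|T_{2,1}(f^{-1})|=|b_2^2-b_3^2|\leq |b_2|^2+|b_3|^2,
\]
and substituting the two bounds above yields
\[
|T_{2,1}(f^{-1})|\leq \frac{B_1^2}{4}+\frac{(2B_1^2-B_2)^2}{36},
\]
as required.

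For sharpness, I would test the extremal function $h_\varphi\in\mathcal{C}(\varphi)$ defined in~(\ref{ExtCphi}), corresponding to the Schwarz function $\omega(z)=iz$ (so $c_1=i$, $c_2=c_3=0$). Substituting into~(\ref{a2a3a4Cphi}) gives $a_2=iB_1/2$ and $a_3=-(B_1^2+B_2)/6$, hence $b_2=-iB_1/2$ and $b_3=(B_2-2B_1^2)/6$. A direct computation then yields $b_2^2-b_3^2=-B_1^2/4-(2B_1^2-B_2)^2/36$, whose modulus equals the claimed bound. The only step I anticipate needing a little care is confirming that both the first and the third branches of~(\ref{FSCPHI}) produce the same absolute bound $|2B_1^2-B_2|/6$ under the hypothesis; everything else is a mechanical assembly of previously established estimates.
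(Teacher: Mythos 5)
Your proposal is correct and follows essentially the same route as the paper: the bound $|b_2|\leq B_1/2$ from~(\ref{a2Cphi}), the Fekete--Szeg\H{o} estimate~(\ref{FSCPHI}) at $\lambda=2$ giving $|b_3|\leq|2B_1^2-B_2|/6$ under the hypothesis (which indeed forces the first or third branch, both yielding the same modulus), the triangle inequality on $b_2^2-b_3^2$, and sharpness via $h_\varphi$ from~(\ref{ExtCphi}). Your explicit computation of $b_2=-iB_1/2$ and $b_3=(B_2-2B_1^2)/6$ for the extremal function is in fact slightly more detailed than the paper's, which merely asserts the sharpness.
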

\begin{proof}
   For $f\in \mathcal{C}(\varphi)$, estimate of $\vert a_2 \vert$ given in (\ref{a2Cphi}) together with (\ref{b2b3b4}) yields
\begin{equation}\label{b2Cphi}
    \vert b_2 \vert \leq \frac{B_1}{2}.
\end{equation}
   In view of the hypothesis $\vert 2 B_1^2 - B_2 \vert \geq B_1$, Fekete-Szeg\"{o} estimate for $f\in \mathcal{C}(\varphi)$ given in (\ref{FSCPHI}), gives
\begin{equation}\label{b3Cphi}
    \vert b_3 \vert = \vert a_3 -2 a_2^2 \vert \leq \frac{\vert 2 B_1^2 - B_2 \vert}{6}.
\end{equation}
   From these estimates of $\vert b_2\vert$ and $\vert b_3\vert$ given in (\ref{b2Cphi}) and (\ref{b3Cphi}), respectively, we obtain
   $$ \vert T_{2,1}(f^{-1})\vert \leq \vert b_2 \vert^2 +\vert b_3 \vert^2 \leq \frac{B_1^2}{4}+ \frac{( 2 B_1^2 - B_2 )^2}{36}. $$
   It can be easily seen that the bound is sharp for $h_\varphi\in \mathcal{C}(\varphi)$ given by (\ref{ExtCphi}).
\end{proof}
\begin{theorem}
   Let $f\in \mathcal{S}^*(\varphi)$ and $\varphi(z) = 1 + B_1 z+ B_2 z^2  + \cdots$. If $(\sigma_3, \mu_3) \in \cup_{i=2}^3 \Omega_i$ and $B_1 \leq \vert 3 B_1^2 - B_2 \vert$ hold, then
   $$ \vert T_{2,2}(f^{-1}) \vert \leq  \frac{(B_2 - 3 B_1^2)^2}{4}+ \frac{ (8 B_1^3 - 6 B_1 B_2 + B_3)^2}{9}, $$
   where
   $$ \sigma_3 = \frac{2 (B_2-3 B_1^2 )}{B_1} \;\; \text{and} \;\; \mu_3 = \frac{(8 B1^3 - 6 B1 B2 + B3)}{B_1}. $$
   The estimate is sharp.
\end{theorem}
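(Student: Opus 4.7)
The plan is to split $T_{2,2}(f^{-1}) = b_3^2 - b_4^2$ and bound $|b_3|$ and $|b_4|$ separately using the hypotheses, then combine with the triangle inequality $|T_{2,2}(f^{-1})| \leq |b_3|^2 + |b_4|^2$ and verify sharpness on the Ma--Minda extremal $f_{\varphi}$ of \eqref{ExtSphi}.

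First I would reuse the work already done for $|T_{2,1}(f^{-1})|$. From \eqref{b2b3b4}, $b_3 = -a_3 + 2 a_2^2$, so $|b_3| = |a_3 - 2 a_2^2|$; the Fekete--Szeg\H{o} estimate \eqref{FSSPHI} with $\lambda = 2$ gives, under $B_1 \leq |3 B_1^2 - B_2|$, the bound $|b_3| \leq |3 B_1^2 - B_2|/2$, exactly as in \eqref{b3Sphi}.

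Next I would handle $|b_4|$. Substituting the expressions \eqref{a2a3a4Sphi} into $b_4 = -5 a_2^3 + 5 a_2 a_3 - a_4$ and collecting the coefficients of $c_1^3$, $c_1 c_2$ and $c_3$, the terms should line up so that
\begin{equation*}
    b_4 = -\frac{B_1}{3}\bigl( c_3 + \sigma_3 c_1 c_2 + \mu_3 c_1^3 \bigr)
\end{equation*}
with $\sigma_3$ and $\mu_3$ exactly as stated in the theorem. Since $(\sigma_3, \mu_3) \in \Omega_2 \cup \Omega_3$ by hypothesis, Lemma~\ref{Lemma1} yields
\begin{equation*}
    |b_4| \leq \frac{B_1}{3}\,|\mu_3| = \frac{|8 B_1^3 - 6 B_1 B_2 + B_3|}{3}.
\end{equation*}
Substituting the two bounds into $|T_{2,2}(f^{-1})| \leq |b_3|^2 + |b_4|^2$ produces the claimed estimate.

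The main obstacle is the coefficient-collection step for $b_4$: it is purely algebraic but has several terms at the $c_1^3$ level (from $a_2^3$, $a_2 a_3$ and the $c_1^3$ part of $a_4$), so one has to be careful to obtain the common factor $B_1/3$ and the precise $\sigma_3, \mu_3$ that match what the statement asserts; this is the same pattern as in the proof of Theorem~3 but with different multipliers coming from the inverse-coefficient formula rather than from $\Gamma_3$. For sharpness, I would evaluate $b_3$ and $b_4$ on $f_{\varphi}$ of \eqref{ExtSphi}: one expects $b_3 = (B_2 - 3 B_1^2)/2$ and $b_4 = \pm i (8 B_1^3 - 6 B_1 B_2 + B_3)/3$, so $b_3^2$ is real and $b_4^2$ is real and nonpositive, whence $|b_3^2 - b_4^2| = |b_3|^2 + |b_4|^2$ and equality is attained.
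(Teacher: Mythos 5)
Your proposal is correct and follows essentially the same route as the paper: the bound $\vert b_3\vert \le \vert 3B_1^2-B_2\vert/2$ via the Fekete--Szeg\H{o} estimate \eqref{FSSPHI} with $\lambda=2$, the identity $b_4=-\frac{B_1}{3}(c_3+\sigma_3c_1c_2+\mu_3c_1^3)$ combined with Lemma~\ref{Lemma1} to get $\vert b_4\vert\le\vert 8B_1^3-6B_1B_2+B_3\vert/3$, the triangle inequality $\vert T_{2,2}(f^{-1})\vert\le\vert b_3\vert^2+\vert b_4\vert^2$, and sharpness on $f_\varphi$ from \eqref{ExtSphi} where $b_3$ is real and $b_4$ is purely imaginary. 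The coefficient collection you flag as the main obstacle does work out exactly as you predict, yielding the stated $\sigma_3$ and $\mu_3$.
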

\begin{proof}
   For $f\in \mathcal{S}^*(\varphi)$, the expressions of $a_2$, $a_3$ and $a_4$ from (\ref{a2a3a4Sphi}) yield
   $$ \vert  -5 a_2^3 + 5 a_2 a_3 -a_4 \vert = \frac{B_1}{3} \vert c_3 + \sigma_3 c_1 c_2  + \mu_3 c_1^3 \vert, $$
   where
   $ \sigma_3 = {2 (B_2-3 B_1^2 )}/{B_1}$  and  $\mu_3 = {(8 B_1^3 - 6 B_1 B_2 + B_3)}/{B_1}. $
   By applying Lemma~\ref{Lemma1} together with equation~(\ref{b2b3b4}), we obtain
\begin{equation}\label{b4Sphi}
   \vert b_4 \vert  \leq \frac{\vert 8 B_1^3 - 6 B_1 B_2 + B_3 \vert}{3}.
\end{equation}
   From (\ref{Toeplb2b3}), it follows that
   $$ \vert  T_{2,2}(f^{-1}) \vert \leq \vert b_3\vert^2 + \vert b_4 \vert^2. $$
   Using the bounds for $\vert b_3\vert$ and $\vert b_4\vert$ provided in (\ref{b3Sphi}) and (\ref{b4Sphi}), respectively, the inequality leads to the desired result.

   For the function $f_\varphi\in \mathcal{S}^*(\varphi)$ given by (\ref{ExtSphi}), we have
   $$ b_3 =  \frac{B_2 - 3 B_1^2}{2}, \;\; b_4= \frac{ i (8 B_1^3 - 6 B_1 B_2 + B_3)}{3} $$
   and
   $$ \vert b_3^2 - b_4^2 \vert = \frac{(B_2 - 3 B_1^2)^2}{4}+ \frac{ (8 B_1^3 - 6 B_1 B_2 + B_3)^2}{9}, $$
   which demonstrates the sharpness of the bound.
\end{proof}
\begin{theorem}\label{lastthm}
   Let $f\in \mathcal{C}(\varphi)$ and $\varphi(z) = 1 + B_1 z+ B_2 z^2 + \cdots$. If $(\sigma_4, \mu_4) \in \cup_{i=1}^3 \Omega_i$ and $B_1 \leq \vert 2 B_1^2 - B_2 \vert$ hold, then
   $$ \vert b_3^2 - b_4^2 \vert \leq \frac{(B_2-2 B_1^2)^2}{36}+  \frac{(6 B_1^3 - 7 B_1 B_2 + 2 B_3)^2}{576}, $$
   where
   $$ \sigma_4 = \frac{ 4 B_2 -7 B_1^2}{ 2 B_1}, \;\; \mu_4 = \frac{6 B_1^3 - 7 B_1 B_2 + 2 B_3}{2 B_1}. $$
   The estimate is sharp.
\end{theorem}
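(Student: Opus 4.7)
The plan is to mirror the structure of the proof of the preceding theorem (the starlike analogue), adapted to the convex setting. Starting from the basic inequality
\[
\vert T_{2,2}(f^{-1})\vert = \vert b_3^2 - b_4^2\vert \leq \vert b_3\vert^2 + \vert b_4\vert^2,
\]
the proof reduces to obtaining sharp estimates of $\vert b_3\vert$ and $\vert b_4\vert$ separately. The bound on $\vert b_3\vert$ is already in hand: under the hypothesis $B_1 \leq \vert 2 B_1^2 - B_2\vert$, the estimate (\ref{b3Cphi}) gives $\vert b_3\vert \leq \vert 2 B_1^2 - B_2\vert/6$, which is exactly what feeds the first summand $(B_2 - 2 B_1^2)^2/36$ of the claimed bound.

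The main work is to bound $\vert b_4\vert$. I would start from $b_4 = -5 a_2^3 + 5 a_2 a_3 - a_4$ given in (\ref{b2b3b4}) and substitute the expressions of $a_2, a_3, a_4$ from (\ref{a2a3a4Cphi}) in terms of the Schwarz coefficients $c_1,c_2,c_3$. Collecting terms, I expect the expression to assume the form
\[
b_4 = -\frac{B_1}{12}\bigl(c_3 + \sigma_4 c_1 c_2 + \mu_4 c_1^3\bigr),
\]
with $\sigma_4$ and $\mu_4$ as declared in the statement. Once this normalisation is verified, the hypothesis $(\sigma_4,\mu_4)\in \cup_{i=1}^3 \Omega_i$ allows a direct application of Lemma \ref{Lemma1} to conclude
\[
\vert b_4\vert \leq \frac{B_1}{12}\,\vert \mu_4\vert = \frac{\vert 6 B_1^3 - 7 B_1 B_2 + 2 B_3\vert}{24},
\]
which contributes the second summand $(6 B_1^3 - 7 B_1 B_2 + 2 B_3)^2/576$ after squaring. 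Combining both estimates via the triangle inequality yields the claimed bound.

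For sharpness, I would use the extremal function $h_\varphi \in \mathcal{C}(\varphi)$ defined in (\ref{ExtCphi}), which is the natural candidate since it already served as the extremal for the $\mathcal{C}(\varphi)$ results in Theorems 2 and the $\vert T_{2,1}(f^{-1})\vert$ analogue. The rotation by $i$ in the definition forces $b_3$ to be real and $b_4$ to be purely imaginary, so $b_3^2 - b_4^2$ becomes a sum of two non-negative terms and equality $\vert b_3^2 - b_4^2\vert = \vert b_3\vert^2 + \vert b_4\vert^2$ holds. A short direct computation of $b_3$ and $b_4$ for $h_\varphi$ then matches the asserted bound exactly.

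The main obstacle is purely computational: tracking the coefficient of $c_1^3$ in $-5 a_2^3 + 5 a_2 a_3 - a_4$ requires combining three fractions with distinct denominators $8$, $12$, and $24$, and a sign error there would propagate to a wrong $\mu_4$. Beyond this, there is no conceptual difficulty — the whole argument is a convex-class translation of the preceding starlike proof, with Lemma \ref{Lemma1} doing the essential work.
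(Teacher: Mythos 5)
Your proposal follows essentially the same route as the paper's proof: the same reduction $\vert b_3^2-b_4^2\vert\leq\vert b_3\vert^2+\vert b_4\vert^2$, the same use of (\ref{b3Cphi}) for $\vert b_3\vert$, the same normalisation of $b_4=-5a_2^3+5a_2a_3-a_4$ to the form $-\tfrac{B_1}{12}(c_3+\sigma_4 c_1c_2+\mu_4 c_1^3)$ followed by Lemma \ref{Lemma1}, and the same extremal function $h_\varphi$ from (\ref{ExtCphi}) for sharpness. The computation you flag as the only risk does come out as you predict ($\vert b_4\vert\leq\vert 6B_1^3-7B_1B_2+2B_3\vert/24$), so the argument is correct and matches the paper.
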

\begin{proof}
     Substituting the values of  $a_2$, $a_3$ and $a_4$ from (\ref{a2a3a4Cphi}) for $f\in \mathcal{C}(\varphi)$ , we get
     $$ \vert  -5 a_2^3 + 5 a_2 a_3 -a_4 \vert  = \frac{B_1 }{12}\vert c_3 +\sigma_4 c_1 c_2   + \mu_4  c_1^3  \vert,  $$
     where
     $ \sigma_4 = (4 B_2 -7 B_1^2)/ (2 B_1)$ and $\mu_4 = (6 B_1^3 - 7 B_1 B_2 + 2 B_3)/(2 B_1). $
     Applying Lemma \ref{Lemma1} together with (\ref{b2b3b4}), we obtain
\begin{equation}\label{b4Cphi}
     \vert b_4 \vert   \leq \frac{6 B_1^3 - 7 B_1 B_2 + 2 B_3}{24}.
\end{equation}
    From (\ref{Toeplb2b3}), we deduce that
     $$ \vert  T_{2,2}(f^{-1}) \vert \leq \vert b_2\vert^2 + \vert b_3 \vert^2. $$
     By substituting the bounds for $\vert b_3\vert $ and $\vert b_4\vert $ given by (\ref{b3Cphi}) and (\ref{b4Cphi}), respectively, we arrive at the required result.

     Sharpness of the bound follows from $h_\varphi\in \mathcal{C}(\varphi)$ defined by (\ref{ExtCphi}) as for this function, we have
     $$ b_3= \frac{(B_2-2 B_1^2)}{6},\quad b_4=  \frac{i (6 B_1^3 - 7 B_1 B_2 + 2 B_3)}{24} $$
     and
     $$\vert b_3^2 - b_4^2 \vert = \frac{(B_2-2 B_1^2)^2}{36}+  \frac{(6 B_1^3 - 7 B_1 B_2 + 2 B_3)^2}{576}, $$
     which completes the proof.
\end{proof}
\section{Special Cases }
    The classes $\mathcal{S}^*(\varphi)$ and $\mathcal{C}(\varphi)$  comprise several well known subclasses including $\mathcal{S}^*[A,B]$, $\mathcal{C}[A,B]$, $\mathcal{S}^*(\alpha)$, $\mathcal{C}(\alpha)$, $\mathcal{S}^*$, $\mathcal{C}$, $\mathcal{SS}^*(\beta)$, $\mathcal{CC}(\beta)$ and those presented in Table \ref{tb1}.  Theorem \ref{thm1}--\ref{lastthm} provide the sharp estimates of $\vert T_{2,1}(F_{f^{-1}})\vert $, $\vert T_{2,2}(F_{f^{-1}})\vert $, $\vert T_{2,1}({f^{-1}})\vert$ and $\vert T_{2,2}({f^{-1}})\vert$ for these subclasses of starlike and convex functions, corresponding to appropriate choices of the function $\varphi$.
\begin{corollary}
   Let $f\in \mathcal{S}^*[A,B]$.Then the following sharp bounds hold:
\begin{enumerate}
  \item If $\vert 2 A-B \vert \geq 1$, then
  $$ \vert T_{2,1}(F_{f^{-1}})\vert \leq \frac{1}{16} (A-B)^2 (4 A^2-4 A B+B^2+4 ).$$
  \item If $\vert 2 A-B \vert \geq 1$ and $(\sigma_1, \mu_1) = \cup_{i=1}^3 \Omega_i$, then
  $$ \vert T_{2,2}(F_{f^{-1}})\vert \leq   \frac{(A - B )^2 ((9 A^2 - 9 A B + 2 B^2 )^2 + 9 (B-2 A)^2 )}{144}, $$
  where
  $ \sigma_1= (5 B-9 A)/{2}$ and $\mu_1= (3 A-2 B) (3 A-B)/2. $
  \item If $\vert 3 A-2 B \vert \geq 1$, then
  $$ \vert T_{2,1}({f^{-1}})\vert \leq \frac{ (A-B)^2((3 A-2 B)^2+4 )  }{4}.$$
  \item If $\vert 3 A-2 B \vert \geq 1$ and $(\sigma_3, \mu_3) = \cup_{i=2}^3 \Omega_i$, then
  $$ \vert T_{2,2}({f^{-1}})\vert \leq  \frac{ (A-B)^2 (9 (3 A-2 B)^2+4 (B-2 A)^2 (4 A-3 B)^2 )}{36}, $$
  where $ \sigma_3 =4 B-6 A$ and $\mu_3= 8 A^2-10 A B+3 B^2. $
\end{enumerate}
\end{corollary}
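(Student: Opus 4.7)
The proof is a direct specialization of Theorems \ref{thm1}--\ref{lastthm} to the Janowski function
$$\varphi(z)=\frac{1+Az}{1+Bz}=1+(A-B)z-B(A-B)z^{2}+B^{2}(A-B)z^{3}-\cdots,$$
so my first step is to read off
$$B_{1}=A-B,\qquad B_{2}=-B(A-B),\qquad B_{3}=B^{2}(A-B),$$
and then substitute these into each of the four master inequalities. The whole proof amounts to (a) checking that the hypothesis $|B_{2}-2B_{1}^{2}|\geq B_{1}$ (resp.\ $|3B_{1}^{2}-B_{2}|\geq B_{1}$) translates to the cleaner form stated, and (b) simplifying the polynomial expressions in $B_{1},B_{2},B_{3}$ that appear in the bounds.

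For part (1) I compute $B_{2}-2B_{1}^{2}=-(A-B)(2A-B)$, so $|B_{2}-2B_{1}^{2}|\geq B_{1}$ is equivalent to $|2A-B|\geq 1$, and Theorem \ref{thm1} yields
$$\frac{B_{1}^{2}}{4}+\frac{(2B_{1}^{2}-B_{2})^{2}}{16}=\frac{(A-B)^{2}}{16}\bigl(4+(2A-B)^{2}\bigr),$$
which rearranges to the stated expression. For part (3), the analogous identity $3B_{1}^{2}-B_{2}=(A-B)(3A-2B)$ converts the hypothesis into $|3A-2B|\geq 1$, and Theorem 5 (for $T_{2,1}(f^{-1})$) gives the bound after factoring out $(A-B)^{2}/4$.

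For parts (2) and (4), the core algebraic step is to factor $(A-B)$ out of the cubic-in-$B_{j}$ expressions appearing in $\mu_{1}$ and $\mu_{3}$. I compute
\begin{align*}
9B_{1}^{3}-9B_{1}B_{2}+2B_{3}&=(A-B)\bigl(9(A-B)^{2}+9B(A-B)+2B^{2}\bigr)=(A-B)(9A^{2}-9AB+2B^{2}),\\
8B_{1}^{3}-6B_{1}B_{2}+B_{3}&=(A-B)(8A^{2}-10AB+3B^{2}),
\end{align*}
and similarly $9B_{1}^{2}-4B_{2}=(A-B)(9A-5B)$, $2(B_{2}-3B_{1}^{2})=-2(A-B)(3A-2B)$. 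Dividing by $B_{1}=A-B$ produces the stated values of $\sigma_{1},\mu_{1},\sigma_{3},\mu_{3}$, and the identity $(3A-2B)(3A-B)=9A^{2}-9AB+2B^{2}$ brings $\mu_{1}$ into its factored form. The bound then follows on collecting powers of $(A-B)$ out of Theorems 3 and 7; noting $(B-2A)^{2}=(2A-B)^{2}$ in case (2) and $(2A-B)(4A-3B)=8A^{2}-10AB+3B^{2}$ in case (4) puts the results in the forms displayed in the corollary.

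The main obstacle is purely bookkeeping: one has to check in each of (2) and (4) that the specific $(\sigma_{i},\mu_{i})$ arising from $\varphi(z)=(1+Az)/(1+Bz)$ actually lies in the admissible region $\cup\Omega_{i}$ required by the parent theorem, since the corollary assumes this but it is not automatic from $-1\leq B<A\leq 1$; this reduces to verifying explicit polynomial inequalities in $A,B$ for Lemma \ref{Lemma1} to apply. Sharpness in every case is inherited from the extremal function $f_{\varphi}$ (or $h_{\varphi}$) of the parent theorem, evaluated at the Janowski $\varphi$, so no separate extremal construction is needed.
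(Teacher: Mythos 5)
Your proposal is correct and follows exactly the route the paper intends: the corollary is an immediate specialization of Theorems \ref{thm1}, 3, 5 and 7 via $B_1=A-B$, $B_2=-B(A-B)$, $B_3=B^2(A-B)$, and all of your factorizations (e.g.\ $9B_1^3-9B_1B_2+2B_3=(A-B)(3A-2B)(3A-B)$, $8B_1^3-6B_1B_2+B_3=(A-B)(2A-B)(4A-3B)$) check out, with sharpness inherited from $f_\varphi$. Since the membership $(\sigma_i,\mu_i)\in\cup\Omega_i$ is taken as a hypothesis of the corollary rather than something to be verified, no further work is needed there, and your treatment matches the paper's.
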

\begin{corollary}
   Let $f\in \mathcal{C}[A,B]$. Then the following sharp bounds hold:
\begin{enumerate}
  \item If $\vert 5 A-B\vert \geq 4$, then
  $$ \vert T_{2,1}(F_{f^{-1}})\vert \leq \frac{(A-B)^2 (25 A^2-10 A B + B^2 + 144 )}{2304},$$
  \item If $\vert 5 A-B\vert \geq 4$ and $(\sigma_2, \mu_2)\in \cup_{i=2}^3 \Omega_i$, then
  $$  \vert T_{2,2}(F_{f^{-1}})\vert \leq \frac{(A-B)^2 (A^2 (B-3 A)^2+(B-5 A)^2)}{2304}, $$
  where
  $\sigma_2 =(B-5 A)/2$ and $\mu_2 =  A (3 A-B)/2.$
  \item If $\vert 2 A-B\vert \geq A-B$, then
  $$     \vert T_{2,1}({f^{-1}})\vert \leq  \frac{(A-B)^2 ((B-2 A)^2+9 )}{36}.   $$
  \item If $\vert 2 A-B\vert \geq A-B$ and $(\sigma_4, \mu_4)\in \cup_{i=1}^3 \Omega_i$, then
  $$  \vert T_{2,2}({f^{-1}})\vert  \leq  \frac{(2 A^2-3 A B+B^2 )^2 ((B-3 A)^2+16 )}{576}, $$
  where
  $\sigma_4 =(7 A-3 B)/{2} $ and $\mu_4 = (6 A^2-5 A B+B^2)/2.$
\end{enumerate}
\end{corollary}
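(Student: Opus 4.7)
The plan is to derive all four bounds as direct specialisations of the corresponding general Ma--Minda convex theorems established above (one each for $|T_{2,1}(F_{f^{-1}})|$, $|T_{2,2}(F_{f^{-1}})|$, $|T_{2,1}(f^{-1})|$, $|T_{2,2}(f^{-1})|$) to the Janowski choice $\varphi(z)=(1+Az)/(1+Bz)$. First I would expand $\varphi$ as a geometric series to read off
$$B_1 = A-B, \qquad B_2 = -B(A-B), \qquad B_3 = B^2(A-B),$$
noting that $B_1>0$ since $B<A$, and substitute these three values into the closed-form bounds and the auxiliary quantities $(\sigma_i,\mu_i)$ of each general theorem.

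Next I would translate the hypotheses. The condition $|B_2 - \tfrac{5}{4}B_1^2|\ge B_1$ factorises via
$$B_2 - \tfrac{5}{4}B_1^2 \;=\; -\tfrac{(A-B)(5A-B)}{4},$$
so dividing by $B_1 = A-B$ reduces it to $|5A-B|\ge 4$, matching parts (i) and (ii). Similarly, the condition $B_1 \le |2B_1^2 - B_2|$ simplifies through $2B_1^2 - B_2 = (A-B)(2A-B)$ to the form stated in (iii) and (iv). The $(\sigma_i,\mu_i)$ membership hypotheses of the general theorems pass through the substitution verbatim; for instance
$$\sigma_2 = -\tfrac{5B_1^2 - 4B_2}{2B_1} = \tfrac{B-5A}{2},\qquad \mu_2 = \tfrac{3B_1^3 - 5B_1 B_2 + 2B_3}{2B_1} = \tfrac{A(3A-B)}{2},$$
and analogous expressions hold for $\sigma_4,\mu_4$.

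The heart of the argument is the algebraic verification that, after substitution, each general bound collapses to the claimed right-hand side. The key identities I would isolate first are the cubic factorisations
$$3B_1^3 - 5B_1 B_2 + 2B_3 = (A-B)\,A\,(3A-B), \qquad 6B_1^3 - 7B_1 B_2 + 2B_3 = (A-B)(2A-B)(3A-B),$$
together with $(A-B)(2A-B)=2A^2-3AB+B^2$. Once these are in hand, a factor of $(A-B)^2 = B_1^2$ cancels cleanly from every term of the general bound, and collecting over the common denominator ($2304$ or $576$, as appropriate) delivers precisely the four stated expressions. Sharpness is inherited verbatim from the extremal function $h_\varphi$ of~(\ref{ExtCphi}), which here takes the explicit form $1+zh_\varphi''(z)/h_\varphi'(z) = (1+iAz)/(1+iBz)$.

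The only real obstacle is keeping the cubic arithmetic organised; the two factorisations displayed above are the one nontrivial algebraic step. No further membership check in $\cup_i \Omega_i$ is required beyond invoking the stated hypothesis, so once the factorisations are in place each of (i)--(iv) reduces to a two-line simplification.
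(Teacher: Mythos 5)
Your proposal follows exactly the route the paper intends: the paper gives no separate proof of this corollary, presenting it as the specialization of the four $\mathcal{C}(\varphi)$ theorems to the Janowski function $\varphi(z)=(1+Az)/(1+Bz)$, with $B_1=A-B$, $B_2=-B(A-B)$, $B_3=B^2(A-B)$. Your two key factorizations $3B_1^3-5B_1B_2+2B_3=(A-B)\,A\,(3A-B)$ and $6B_1^3-7B_1B_2+2B_3=(A-B)(2A-B)(3A-B)$ are correct, and substituting them does collapse the general bounds to the four stated expressions; parts (i) and (ii), including the reduction of $|B_2-\tfrac54 B_1^2|\ge B_1$ to $|5A-B|\ge 4$, check out completely.

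One claim, however, does not survive scrutiny: that the hypothesis $B_1\le|2B_1^2-B_2|$ ``simplifies to the form stated in (iii) and (iv).'' Since $2B_1^2-B_2=(A-B)(2A-B)$, dividing $(A-B)\le(A-B)|2A-B|$ by $A-B>0$ gives $|2A-B|\ge 1$, \emph{not} the corollary's condition $|2A-B|\ge A-B$; the two agree only when $A-B=1$. The stated condition appears to be an error in the paper itself: if $|2A-B|\ge A-B$ holds but $|2A-B|<1$, then the middle case of the Fekete--Szeg\H{o} bound (\ref{FSCPHI}) applies, giving $|b_3|=|a_3-2a_2^2|\le B_1/6$ rather than $|2B_1^2-B_2|/6$, and the argument no longer produces the claimed right-hand sides. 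So your proof, as written, establishes (iii) and (iv) under the corrected hypothesis $|2A-B|\ge 1$; you should say so explicitly instead of asserting agreement with the statement. A minor further note: the actual substitution yields $\sigma_4=(3B-7A)/2$, the negative of the paper's $(7A-3B)/2$, which is immaterial here because membership in the regions $\Omega_i$ of Lemma \ref{Lemma1} depends only on $|\sigma|$.
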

%%%%%%%%%%%%%%%%%%%%%%%%%%%%%%%%%%%%%%%%%%%%%%%%%%%%%%%%%%%%%%%%%%%%%%%%%%%%%%%%%%%%%%%%%%%%%%%%%%%%%%%%%%%%%%%%%%%%%%%%%%
\begin{corollary}
    Let $f \in \mathcal{S}^*(\alpha)$. Then the following hold:
\begin{enumerate}
  \item If $\alpha \in [0,1/2]$, then
   $$ \vert T_{2,1}(F_{f^{-1}})\vert \leq \frac{(1- \alpha)^2((3-4 \alpha )^2 + 4 ) }{4}. $$
   \item If $\alpha \in [0,7/15]$, then
    $$ \vert T_{2,2}(F_{f^{-1}})\vert \leq \frac{(1- \alpha )^2 (9 (3-4 \alpha )^2+4 (2-3 \alpha )^2 (5-6 \alpha )^2)  }{36} . $$
  \item If $\alpha \in [0,2/3]$, then
    $$  \vert T_{2,1}({f^{-1}})\vert  \leq   (1 - \alpha)^2 \left(36 \alpha ^2-60 \alpha +29\right). $$
  \item If $\alpha \in [0,3/5]$, then
    $$  \vert T_{2,2}({f^{-1}})\vert  \leq \frac{(1- \alpha)^2 (9 (5-6 \alpha )^2+4 (3-4 \alpha )^2 (7-8 \alpha )^2) } {9}  . $$
\end{enumerate}

\end{corollary}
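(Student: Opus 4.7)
The plan is to obtain each of the four bounds by specializing the corresponding $\mathcal{S}^*(\varphi)$ theorem from Sections 2 and 3 to the Ma-Minda function $\varphi(z) = (1 + (1-2\alpha)z)/(1-z)$. A routine power-series expansion yields the uniform coefficient pattern $B_1 = B_2 = B_3 = 2(1-\alpha)$, which collapses every hypothesis and every bound appearing in those theorems to a single-variable expression in $\alpha$.

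For items 1 and 3, I would substitute $B_1 = B_2 = 2(1-\alpha)$ into the bounds $\frac{B_1^2}{4} + \frac{(2B_1^2 - B_2)^2}{16}$ (from Theorem \ref{thm1}) and $B_1^2 + \frac{(3B_1^2 - B_2)^2}{4}$ (from the corresponding $\vert T_{2,1}(f^{-1})\vert$ theorem). After simplification these factor as $\frac{(1-\alpha)^2(4 + (3-4\alpha)^2)}{4}$ and $(1-\alpha)^2(36\alpha^2 - 60\alpha + 29)$, matching the stated expressions. The size hypotheses $\vert B_2 - 2B_1^2\vert \geq B_1$ and $\vert 3B_1^2 - B_2\vert \geq B_1$ reduce to $\vert 4\alpha - 3\vert \geq 1$ and $\vert 6\alpha - 5\vert \geq 1$, which inside $[0, 1)$ give precisely the intervals $\alpha \in [0, 1/2]$ and $\alpha \in [0, 2/3]$.

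Items 2 and 4 require, in addition to the analogous size hypothesis, that the parameter pairs fall in the admissibility regions of Lemma \ref{Lemma1}. After substitution one finds $\sigma_1 = 9\alpha - 7$ with $\mu_1 = (2-3\alpha)(5-6\alpha)$, and $\sigma_3 = 12\alpha - 10$ with $\mu_3 = (3-4\alpha)(7-8\alpha)$. Exploiting the convenient factorizations $9B_1^3 - 9B_1 B_2 + 2B_3 = 4(1-\alpha)(2-3\alpha)(5-6\alpha)$ and $8B_1^3 - 6B_1 B_2 + B_3 = 2(1-\alpha)(3-4\alpha)(7-8\alpha)$, the squared numerators in the theorem bounds condense to the compact closed forms displayed in the corollary.

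The main obstacle is the admissibility case analysis: for item 2, identifying the $\alpha$-range where $(\sigma_1, \mu_1) \in \cup_{i=1}^3 \Omega_i$, and for item 4, the range where $(\sigma_3, \mu_3) \in \cup_{i=2}^3 \Omega_i$. This amounts to piecewise checks of $\vert \sigma_j\vert$ against the thresholds $2$ and $4$ and comparing $\mu_j$ to the curves $1$, $(\sigma^2 + 8)/12$, and $\frac{2}{3}(\vert \sigma\vert - 1)$. The boundary values $\alpha = 7/15$ and $\alpha = 3/5$ should emerge as the points where $(\sigma_j, \mu_j)$ exits $\Omega_2$ along the curve $\mu = (\sigma^2 + 8)/12$; since these cutoffs are strictly tighter than the size-condition cutoffs $1/2$ and $2/3$, they dictate the admissible intervals stated in the corollary, and the four bounds then follow by direct application of the respective theorems.
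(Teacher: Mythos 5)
Your proposal is correct and follows exactly the route the paper intends: the corollary is a direct specialization of Theorems 1, 3, 5 and 7 to $\varphi(z)=(1+(1-2\alpha)z)/(1-z)$, for which $B_1=B_2=B_3=2(1-\alpha)$, and your factorizations, the reduction of the size hypotheses to $\alpha\le 1/2$ and $\alpha\le 2/3$, and the identification of the cutoffs $7/15$ and $3/5$ as the points where $(\sigma_1,\mu_1)$ and $(\sigma_3,\mu_3)$ leave $\Omega_2$ across $\mu=(\sigma^2+8)/12$ (with $\Omega_3$ covering the small-$\alpha$ range) all check out.
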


\begin{corollary}
    If $f\in \mathcal{C}(\alpha)$, Then the following hold:
\begin{enumerate}
  \item If $\alpha \in [0,1/5]$, then
    $$ \vert T_{2,1}(F_{f^{-1}})\vert \leq \frac{5}{144} (1- \alpha )^2 (5 \alpha ^2-6 \alpha +9 ). $$
  \item If $\alpha \in [0,7/47]$, then
    $$ \vert T_{2,2}(F_{f^{-1}})\vert \leq \frac{(\alpha -1)^2 ((6 \alpha ^2-7 \alpha +2)^2+(3-5 \alpha )^2)}{144}. $$
  \item If $\alpha \in [0,1/2]$, then
    $$  \vert T_{2,1}({f^{-1}})\vert  \leq \frac{((1 -\alpha )^2(3-4 \alpha )^2+9)}{9}  . $$
  \item If  $\alpha \in [0,39/95]$, then
    $$  \vert T_{2,2}({f^{-1}})\vert  \leq  \frac{( 1 -\alpha)^2 ((2-3 \alpha )^2+4 ) (3-4 \alpha )^2 } {36} . $$
\end{enumerate}
\end{corollary}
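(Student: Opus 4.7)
The plan is to deduce all four inequalities by specializing the four $\mathcal{C}(\varphi)$-theorems of Sections~2--3 to $\varphi(z) = (1+(1-2\alpha)z)/(1-z)$, which defines $\mathcal{C}(\alpha)$. Parts (i) and (ii) follow from Theorem~\ref{thm2} and the subsequent theorem on $|T_{2,2}(F_{f^{-1}})|$ in $\mathcal{C}(\varphi)$, while parts (iii) and (iv) follow from the corresponding theorem on $|T_{2,1}(f^{-1})|$ in $\mathcal{C}(\varphi)$ and from Theorem~\ref{lastthm}. The key advantage of this $\varphi$ is that $\varphi(z) - 1 = 2(1-\alpha)(z + z^2 + z^3 + \cdots)$, so every Taylor coefficient satisfies $B_n = 2(1-\alpha)$.

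For each of the four parts I would carry out three steps in order: (a) substitute $B_1 = B_2 = B_3 = 2(1-\alpha)$ into the threshold hypothesis of the parent theorem and solve the resulting inequality in $\alpha$; (b) for parts (ii) and (iv), verify the additional membership $(\sigma_i, \mu_i) \in \bigcup_j \Omega_j$; and (c) substitute into the stated bound and simplify. The key auxiliary identities are $B_2 - \tfrac{5}{4}B_1^2 = (1-\alpha)(5\alpha - 3)$, $2B_1^2 - B_2 = 2(1-\alpha)(3-4\alpha)$, $3B_1^3 - 5B_1 B_2 + 2B_3 = 4(1-\alpha)(6\alpha^2 - 7\alpha + 2)$, and $6B_1^3 - 7B_1 B_2 + 2B_3 = 4(1-\alpha)(3-4\alpha)(2-3\alpha)$. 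For parts (i) and (iii) only step (a) is nontrivial: the thresholds reduce to $|5\alpha - 3| \geq 2$ and $|3 - 4\alpha| \geq 1$, giving $\alpha \leq 1/5$ and $\alpha \leq 1/2$ respectively, and factoring $(1-\alpha)^2$ out of the bound recovers the displayed expressions.

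The main obstacle is parts (ii) and (iv), where the $(\sigma,\mu)$-membership is the binding constraint and forces the stated ranges $\alpha \leq 7/47$ and $\alpha \leq 39/95$. In part (iv), substitution gives $\sigma_4 = 7\alpha - 5$ and $\mu_4 = 12\alpha^2 - 17\alpha + 6$, so $|\sigma_4|$ crosses $4$ at $\alpha = 1/7$. For $\alpha \in [0, 1/7]$ one is in the $\Omega_3$ regime and $\mu_4 \geq \tfrac{2}{3}(|\sigma_4| - 1)$ reduces to a quadratic with negative discriminant, hence holds identically; for $\alpha \in [1/7, 3/7]$ one is in $\Omega_2$ and the requirement $\mu_4 \geq (\sigma_4^2 + 8)/12$ simplifies, after clearing denominators, to $95\alpha^2 - 134\alpha + 39 \geq 0$, whose smaller root is exactly $39/95$. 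The parallel computation in part (ii), with $\sigma_2 = 5\alpha - 3$ and $\mu_2 = 6\alpha^2 - 7\alpha + 2$ always in the $\Omega_2$ regime, yields $47\alpha^2 - 54\alpha + 7 \geq 0$ with smaller root $7/47$. These thresholds are binding against the simpler constraints $\alpha \leq 1/5$ and $\alpha \leq 1/2$ from step (a); with the ranges thus pinned down, step (c) is routine algebraic simplification and produces the four stated bounds.
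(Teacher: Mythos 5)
Your proposal is correct and is essentially the paper's own route: the corollary is stated as a direct specialization of the four $\mathcal{C}(\varphi)$ theorems (Theorem \ref{thm2}, the theorem containing (\ref{T22Cphi}), and the two inverse-coefficient theorems, the last being Theorem \ref{lastthm}) under $B_1=B_2=B_3=2(1-\alpha)$, and all of your computations check out, including the nontrivial membership analysis: $\sigma_2=5\alpha-3$, $\mu_2=6\alpha^2-7\alpha+2$ with binding quadratic $47\alpha^2-54\alpha+7\ge 0$ (smaller root $7/47$), and $\sigma_4=7\alpha-5$, $\mu_4=12\alpha^2-17\alpha+6$ with the $\Omega_3$ condition holding identically on $[0,1/7]$ and the $\Omega_2$ condition reducing to $95\alpha^2-134\alpha+39\ge 0$ (smaller root $39/95$), exactly reproducing the stated ranges. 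One caveat concerning part (iii): correct substitution gives $\frac{B_1^2}{4}+\frac{(2B_1^2-B_2)^2}{36}=(1-\alpha)^2+\frac{(1-\alpha)^2(3-4\alpha)^2}{9}=\frac{(1-\alpha)^2\left((3-4\alpha)^2+9\right)}{9}$, whereas the corollary displays $\frac{(1-\alpha)^2(3-4\alpha)^2+9}{9}$; these coincide only at $\alpha=0$, so the paper's display apparently drops the factor $(1-\alpha)^2$ from the term $9$. Since your computed (sharp) bound is less than or equal to the displayed one, your argument still establishes the stated inequality, but your remark that factoring out $(1-\alpha)^2$ ``recovers the displayed expressions'' is not literally accurate for that part --- the discrepancy is a typographical error in the paper rather than a flaw in your derivation.
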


\begin{corollary}
    If $f\in \mathcal{SS}^*(\beta)$, Then the following hold:
\begin{enumerate}
  \item If $\beta \in [1/3,1] $, then
    $$  \vert T_{2,1}(F_{f^{-1}})\vert \leq \frac{\beta^2 (9 \beta^2 + 4 )}{4} \;\; \text{and} \;\; \vert T_{2,2}(F_{f^{-1}})\vert \leq \frac{\beta^2 (3364 \beta^4+961 \beta ^2 + 4 )}{324}  .  $$
  \item If  $\beta \in [1/5,1]$, then
    $$  \vert T_{2,1}({f^{-1}})\vert  \leq   \beta ^2 (25 \beta^2 + 4 )\;\; \text{and}\;\;  \vert T_{2,2}({f^{-1}})\vert  \leq  \frac{1}{81} \beta^2 (15376 \beta^4 + 2521 \beta^2 + 4 ). $$
\end{enumerate}
\end{corollary}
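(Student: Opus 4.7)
The class $\mathcal{SS}^*(\beta)$ is $\mathcal{S}^*(\varphi)$ with $\varphi(z)=\bigl((1+z)/(1-z)\bigr)^\beta$, so the four bounds are direct specializations of the four $\mathcal{S}^*(\varphi)$–theorems proved earlier. The plan is to (i) read off $B_1,B_2,B_3$ from the Taylor expansion of $\varphi$, (ii) check that the Ma–Minda hypotheses and the lemma-admissibility conditions $(\sigma_j,\mu_j)\in\bigcup\Omega_i$ are active on the stated $\beta$-intervals, and (iii) substitute into the previously derived closed-form bounds and collect like powers of $\beta$.

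For step (i) I would write $\varphi(z)=\exp\bigl(2\beta(z+z^3/3+z^5/5+\cdots)\bigr)$ and expand the exponential to order $3$, obtaining $B_1=2\beta$, $B_2=2\beta^2$, and $B_3=\tfrac{2}{3}\beta(1+2\beta^2)$. For step (ii) the two primary Ma–Minda conditions reduce to simple inequalities in $\beta$: $|B_2-2B_1^2|\geq B_1$ becomes $6\beta^2\geq 2\beta$, i.e.\ $\beta\geq 1/3$, which supplies the interval in Part~1; and $|3B_1^2-B_2|\geq B_1$ becomes $10\beta^2\geq 2\beta$, i.e.\ $\beta\geq 1/5$, which supplies the interval in Part~2. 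These two conditions take care of $|T_{2,1}(F_{f^{-1}})|$ and $|T_{2,1}(f^{-1})|$ respectively, after which the corresponding numerical bounds follow by plugging $(B_1,B_2)=(2\beta,2\beta^2)$ into $B_1^2/4+(2B_1^2-B_2)^2/16$ and $B_1^2+(3B_1^2-B_2)^2/4$.

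For the two $T_{2,2}$-bounds I must additionally verify the admissibility of the parameters driving the Prokhorov–Szynal lemma. For the $T_{2,2}(F_{f^{-1}})$ theorem I would compute $\sigma_1=-7\beta$ and $\mu_1=(29\beta^2+1)/3$, and then split on whether $|\sigma_1|=7\beta$ lies in $[2,4]$ (giving $\beta\in[1/3,4/7]$, in which case the $\Omega_2$-inequality $\mu_1\geq(\sigma_1^2+8)/12$ reduces to $67\beta^2\geq 4$, clearly true for $\beta\geq 1/3$) or in $[4,\infty)$ (giving $\beta\geq 4/7$, in which case the $\Omega_3$-inequality $\mu_1\geq\tfrac{2}{3}(|\sigma_1|-1)$ reduces to $29\beta^2-14\beta+3\geq 0$, which holds identically since its discriminant is negative). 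An analogous split for $(\sigma_3,\mu_3)=(-10\beta,(62\beta^2+1)/3)$ settles the $T_{2,2}(f^{-1})$ case on $[1/5,2/5]\cup[2/5,1]$, where both sub-conditions again reduce to polynomial inequalities in $\beta$ with no real roots in $[0,1]$.

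The only non-routine part of the argument is the $\Omega_i$-bookkeeping; once that is in hand, step (iii) is mechanical. For $T_{2,2}(F_{f^{-1}})$ I would compute
\begin{equation*}
9B_1^3-9B_1B_2+2B_3=\tfrac{4}{3}\beta(29\beta^2+1),\qquad B_2-2B_1^2=-6\beta^2,
\end{equation*}
and substitute into $(B_2-2B_1^2)^2/16+(9B_1^3-9B_1B_2+2B_3)^2/144$; clearing to the common denominator $324$ yields the claimed $\beta^2(3364\beta^4+961\beta^2+4)/324$. For $T_{2,2}(f^{-1})$ I would compute $8B_1^3-6B_1B_2+B_3=\tfrac{2}{3}\beta(62\beta^2+1)$ and $B_2-3B_1^2=-10\beta^2$, substitute into $(B_2-3B_1^2)^2/4+(8B_1^3-6B_1B_2+B_3)^2/9$, and simplify to $\beta^2(15376\beta^4+2521\beta^2+4)/81$. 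Sharpness is inherited from the extremal function $f_\varphi$ of \eqref{ExtSphi}, now written out with the above $B_j$'s, so no separate extremal computation is needed.
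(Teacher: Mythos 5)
Your proposal is correct and follows exactly the route the paper intends for this corollary: specialize the four $\mathcal{S}^*(\varphi)$ theorems to $\varphi(z)=((1+z)/(1-z))^\beta$ with $B_1=2\beta$, $B_2=2\beta^2$, $B_3=\tfrac{2}{3}\beta(1+2\beta^2)$, verify the hypotheses $|B_2-2B_1^2|\geq B_1$ and $|3B_1^2-B_2|\geq B_1$ (giving $\beta\geq 1/3$ and $\beta\geq 1/5$) together with the $\Omega_i$-admissibility of $(\sigma_1,\mu_1)=(-7\beta,(29\beta^2+1)/3)$ and $(\sigma_3,\mu_3)=(-10\beta,(62\beta^2+1)/3)$, then substitute. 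All of your computations check out (the paper itself gives no proof, and your $\Omega_i$-bookkeeping is in fact more careful than anything stated there); the only nitpick is that the $\Omega_2$-inequalities $67\beta^2\geq 4$ and $148\beta^2\geq 4$ do have roots in $[0,1]$, just below the thresholds $1/3$ and $1/5$, so your closing phrase ``no real roots in $[0,1]$'' applies only to the $\Omega_3$ discriminant checks.
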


\begin{corollary}
    Let $f\in \mathcal{CC}(\beta)$. Then the following hold:
\begin{enumerate}
  \item If $\beta \in [2/3,1] $, then
    $$  \vert T_{2,1}(F_{f^{-1}})\vert \leq  \frac{\beta^2 (\beta^2+4 )}{16} \;\; \quad{and}\;\; \vert T_{2,2}(F_{f^{-1}})\vert \leq  \frac{\beta^2 (25 \beta^4 + 91 \beta^2 + 1 )}{1296} .  $$
  \item If $\beta \in [1/3,1]$, then
    $$  \vert T_{2,1}({f^{-1}})\vert  \leq  \beta^2 (\beta^2 + 1 ).  $$
  \item If $\beta \in [\sqrt{2/17},1]$, then
    $$  \vert T_{2,2}({f^{-1}})\vert  \leq  \frac{\beta^2 (289 \beta^4+358 \beta^2+1 )} {324}.  $$
\end{enumerate}
\end{corollary}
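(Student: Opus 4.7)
The class $\mathcal{CC}(\beta)$ is precisely $\mathcal{C}(\varphi)$ with $\varphi(z)=((1+z)/(1-z))^\beta$, so the plan is to specialize each of the four theorems of the previous sections to this choice of $\varphi$. The first computational step is to expand $\varphi(z)=1+B_1 z+B_2 z^2+B_3 z^3+\cdots$ by multiplying the binomial series of $(1+z)^\beta$ and $(1-z)^{-\beta}$, which gives
\[
B_1=2\beta,\qquad B_2=2\beta^2,\qquad B_3=\tfrac{2\beta(2\beta^2+1)}{3}.
\]
With these three numbers in hand, every quantity appearing in the hypotheses and bounds of Theorems~\ref{thm2}--\ref{lastthm} (applied to $\mathcal{C}(\varphi)$) becomes an explicit polynomial in $\beta$.

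For item (1), I would check the two hypotheses. For $|T_{2,1}(F_{f^{-1}})|$, the assumption $|B_2-\tfrac54 B_1^2|\geq B_1$ of Theorem~\ref{thm2} becomes $3\beta^2\geq 2\beta$, i.e.\ $\beta\geq 2/3$; substituting the $B_i$'s into $\tfrac{B_1^2}{16}+\tfrac{1}{144}(B_2-\tfrac54B_1^2)^2$ gives $\tfrac{\beta^2}{4}+\tfrac{\beta^4}{16}=\tfrac{\beta^2(\beta^2+4)}{16}$. For $|T_{2,2}(F_{f^{-1}})|$, besides the same inequality, I must verify $(\sigma_2,\mu_2)\in\Omega_2\cup\Omega_3$; a direct calculation gives $\sigma_2=-3\beta$ and $\mu_2=(5\beta^2+1)/3$, so $|\sigma_2|\geq 2$ requires $\beta\geq 2/3$, and the $\Omega_2$ inequality $\mu_2\geq(\sigma_2^2+8)/12$ reduces to $11\beta^2\geq 4$, which is implied by $\beta\geq 2/3$. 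Plugging $B_1,B_2,B_3$ into the bound and combining over the common denominator $1296$ gives numerator $81\beta^4+\beta^2(5\beta^2+1)^2=\beta^2(25\beta^4+91\beta^2+1)$, matching the stated expression.

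For items (2) and (3), the same specialization is applied to the two theorems of Section~3. The hypothesis $B_1\leq|2B_1^2-B_2|$ turns into $2\beta\leq 6\beta^2$, i.e.\ $\beta\geq 1/3$, and substituting into $\tfrac{B_1^2}{4}+\tfrac{(2B_1^2-B_2)^2}{36}$ yields $\beta^2+\beta^4=\beta^2(\beta^2+1)$. For $|T_{2,2}(f^{-1})|$ I compute $\sigma_4=-5\beta$ and $\mu_4=(17\beta^2+1)/3$, then check region membership: in $\Omega_1$ (when $\beta\leq 2/5$) the required $\mu_4\geq 1$ gives exactly $\beta\geq\sqrt{2/17}$, in $\Omega_2$ (when $2/5\leq\beta\leq 4/5$) the inequality $43\beta^2\geq 4$ is automatic, and in $\Omega_3$ (when $\beta\geq 4/5$) the quadratic $17\beta^2-10\beta+3\geq 0$ holds throughout since its discriminant is negative. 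Hence the effective lower bound is $\sqrt{2/17}$, which also subsumes $\beta\geq 1/3$ needed for the Fekete--Szeg\H{o} step. Evaluating $\tfrac{(B_2-2B_1^2)^2}{36}+\tfrac{(6B_1^3-7B_1B_2+2B_3)^2}{576}$ at the computed $B_i$'s and combining over denominator $324$ gives numerator $\beta^2(289\beta^4+358\beta^2+1)$, as stated.

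Conceptually none of this is deep: it is a carefully bookkept substitution. The only real obstacle is the third step of item~(3), namely confirming that the threshold $\sqrt{2/17}$ is sharp by tracking which $\Omega_i$ contains $(\sigma_4,\mu_4)$ as $\beta$ varies; I would handle this by splitting into the three sub-intervals determined by $|\sigma_4|=5\beta\lessgtr 2,4$ and verifying the corresponding $\mu_4$-inequality in each, as above. The four sharpness claims follow automatically from the sharpness assertions in Theorems~\ref{thm2}--\ref{lastthm}, since the extremal function $h_\varphi$ of~\eqref{ExtCphi} belongs to $\mathcal{CC}(\beta)$.
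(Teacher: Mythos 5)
Your proposal is correct and follows exactly the paper's intended route: the corollary is stated without proof as a direct specialization of the $\mathcal{C}(\varphi)$ theorems (Theorems \ref{thm2}--\ref{lastthm}) to $\varphi(z)=((1+z)/(1-z))^\beta$, with $B_1=2\beta$, $B_2=2\beta^2$, $B_3=\tfrac{2}{3}\beta(2\beta^2+1)$. Your hypothesis checks (the thresholds $\beta\geq 2/3$, $\beta\geq 1/3$, and the $\Omega_i$-membership analysis giving $\beta\geq\sqrt{2/17}$) and the algebraic simplifications to the stated bounds are all accurate.
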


\begin{corollary}
    Let $f \in \mathcal{S}^*$. Then the following sharp bounds hold:
   $$  \vert T_{2,1}(F_{f^{-1}})\vert \leq  \frac{13}{4}, \;\; \vert T_{2,2}(F_{f^{-1}})\vert \leq \frac{481}{36}, \;\; \vert T_{2,1}({f^{-1}})\vert \leq 29, \;\;  \vert T_{2,2}({f^{-1}})\vert \leq  221.  $$
\end{corollary}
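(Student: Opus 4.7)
The plan is to specialize the four preceding theorems to the class $\mathcal{S}^{*}=\mathcal{S}^{*}(\varphi)$ with $\varphi(z)=(1+z)/(1-z)$. Expanding gives $\varphi(z)=1+2z+2z^{2}+2z^{3}+\cdots$, so I would work throughout with the constants $B_{1}=B_{2}=B_{3}=2$. Each of the four bounds in the corollary should then emerge as a direct arithmetic substitution into the closed-form estimates already obtained, and the sharpness will be inherited, with no further work, from the extremal function $f_{\varphi}$ in (\ref{ExtSphi})—for this particular $\varphi$, a rotation of the Koebe function.

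Before substituting, I would verify that all the hypotheses of the four theorems are indeed satisfied for this choice of $\varphi$. The two scalar conditions are immediate: $\vert B_{2}-2B_{1}^{2}\vert=\vert 2-8\vert=6\ge 2=B_{1}$ (needed for the $T_{2,1}(F_{f^{-1}})$ and $T_{2,2}(F_{f^{-1}})$ bounds), and $\vert 3B_{1}^{2}-B_{2}\vert=10\ge 2=B_{1}$ (needed for the $T_{2,1}(f^{-1})$ and $T_{2,2}(f^{-1})$ bounds). The less transparent conditions are the admissibility requirements $(\sigma_{1},\mu_{1})\in\cup_{i=1}^{3}\Omega_{i}$ and $(\sigma_{3},\mu_{3})\in\cup_{i=2}^{3}\Omega_{i}$ from Lemma \ref{Lemma1}. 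Plugging in $B_{1}=B_{2}=B_{3}=2$ into the formulas for $\sigma_{1},\mu_{1},\sigma_{3},\mu_{3}$ yields $(\sigma_{1},\mu_{1})=(-7,10)$ and $(\sigma_{3},\mu_{3})=(-10,21)$. In each case $\vert\sigma\vert\ge 4$ and $\mu\ge \tfrac{2}{3}(\vert\sigma\vert-1)$, so both pairs lie in $\Omega_{3}$.

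With all hypotheses in force, substituting $B_{1}=B_{2}=B_{3}=2$ into the four bounds gives exactly $\tfrac{13}{4}$, $\tfrac{481}{36}$, $29$ and $221$ after collecting terms. The only step that is not utterly mechanical is the admissibility check for the two $(\sigma_{i},\mu_{i})$ pairs; once that is in place, the corollary is immediate.
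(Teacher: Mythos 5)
Your proposal is correct and is exactly the route the paper intends: the corollary is obtained by substituting $B_1=B_2=B_3=2$ (from $\varphi(z)=(1+z)/(1-z)$) into Theorems \ref{thm1}--\ref{lastthm}, and your verification of the hypotheses — in particular that $(\sigma_1,\mu_1)=(-7,10)$ and $(\sigma_3,\mu_3)=(-10,21)$ both lie in $\Omega_3$ — together with the arithmetic yielding $\tfrac{13}{4}$, $\tfrac{481}{36}$, $29$ and $221$ all checks out. The paper itself gives no explicit proof, so your write-up actually supplies more detail (the admissibility check) than the source does.
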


\begin{corollary}
    Let $f \in \mathcal{C}$. Then the following sharp bounds hold:
   $$  \vert T_{2,1}(F_{f^{-1}})\vert \leq  \frac{5}{16}, \;\; \vert T_{2,2}(F_{f^{-1}})\vert \leq \frac{13}{144}, \;\; \vert T_{2,1}({f^{-1}})\vert \leq 2 , \;\;  \vert T_{2,2}({f^{-1}})\vert \leq  2.  $$
\end{corollary}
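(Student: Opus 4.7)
The plan is to recognise $\mathcal{C}$ as the special case $\mathcal{C}(\varphi)$ with $\varphi(z) = (1+z)/(1-z)$, for which every Taylor coefficient equals $2$, so $B_1 = B_2 = B_3 = 2$. Each of the four bounds in the corollary should then drop out of the corresponding $\mathcal{C}(\varphi)$-theorem of Sections~2 and~3: Theorem~\ref{thm2} for $|T_{2,1}(F_{f^{-1}})|$, the $T_{2,2}(F_{f^{-1}})$-theorem for $\mathcal{C}(\varphi)$, the $T_{2,1}(f^{-1})$-theorem for $\mathcal{C}(\varphi)$, and Theorem~\ref{lastthm} for $|T_{2,2}(f^{-1})|$.

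Before substituting I would first verify the standing hypotheses. The two logarithmic-coefficient theorems require $\lvert B_2 - \tfrac{5}{4} B_1^2\rvert \geq B_1$, which here becomes $\lvert 2 - 5\rvert = 3 \geq 2$, and the two inverse-coefficient theorems require $\lvert 2 B_1^2 - B_2\rvert \geq B_1$, which becomes $6 \geq 2$; both are immediate. For the two $T_{2,2}$-bounds I additionally check the $(\sigma,\mu)$ constraints of Lemma~\ref{Lemma1}. Direct computation gives $(\sigma_2,\mu_2) = (-3,2)$, which lies in $\Omega_2$ because $\lvert \sigma_2\rvert = 3 \in [2,4]$ and $\mu_2 = 2 \geq \tfrac{17}{12} = \tfrac{1}{12}(\sigma_2^2 + 8)$; and $(\sigma_4,\mu_4) = (-5,6)$, which lies in $\Omega_3$ because $\lvert \sigma_4\rvert = 5 \geq 4$ and $\mu_4 = 6 \geq \tfrac{8}{3} = \tfrac{2}{3}(\lvert \sigma_4\rvert - 1)$.

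What remains is pure substitution: the four bounds collapse respectively to $\tfrac{1}{4} + \tfrac{9}{144} = \tfrac{5}{16}$, $\tfrac{9}{144} + \tfrac{64}{2304} = \tfrac{13}{144}$, $1 + 1 = 2$, and $1 + 1 = 2$, in agreement with the claim. Sharpness is inherited for free: the extremal function $h_\varphi$ defined in~(\ref{ExtCphi}) specialises, for $\varphi(z) = (1+z)/(1-z)$, to a concrete convex function realising equality in each bound. The only real obstacle is arithmetic bookkeeping of the denominators $16,\,144,\,576,\,2304$, and the parallel verification that both $(\sigma_2,\mu_2)$ and $(\sigma_4,\mu_4)$ fall into the correct $\Omega_i$; no new conceptual step is needed beyond specialising the Ma--Minda framework to the classical convex class.
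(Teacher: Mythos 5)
Your proposal is correct and is exactly the route the paper intends: the corollary is stated as a direct specialization of Theorem~\ref{thm2}, the two $T_{2,2}$ theorems, and the $T_{2,1}(f^{-1})$ theorem for $\mathcal{C}(\varphi)$ to $B_1=B_2=B_3=2$, and your verification of the hypotheses $|B_2-\tfrac54 B_1^2|=3\ge 2$, $|2B_1^2-B_2|=6\ge 2$, $(\sigma_2,\mu_2)=(-3,2)\in\Omega_2$, $(\sigma_4,\mu_4)=(-5,6)\in\Omega_3$, together with the arithmetic, all checks out. No difference from the paper's (implicit) argument.
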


\begin{corollary}
    If $f \in \mathcal{S}^*_\varrho$, then the following sharp estimates hold:
    $$ \vert T_{2,1}(F_{f^{-1}})\vert \leq \frac{5}{16}, \quad \vert T_{2,1}({f^{-1}})\vert \leq 2 \quad \text{and} \quad   \vert T_{2,2}({f^{-1}})\vert \leq \frac{61}{36}. $$
   % All these bounds are sharp.
\end{corollary}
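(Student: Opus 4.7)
The plan is to specialize the earlier theorems for $\mathcal{S}^{*}(\varphi)$ to the case $\varphi(z) = 1 + z e^{z}$ that defines $\mathcal{S}^{*}_{\varrho}$ and to read the three numerical bounds off the closed-form expressions already derived. The first step is to record the Taylor coefficients of $\varphi$: since $z e^{z} = z + z^{2} + \tfrac{1}{2} z^{3} + \tfrac{1}{6} z^{4} + \cdots$, we have $B_{1} = 1$, $B_{2} = 1$, and $B_{3} = \tfrac{1}{2}$.

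Next, for each of the three claimed inequalities I would verify the hypothesis of the corresponding theorem and substitute these values into its bound. For $|T_{2,1}(F_{f^{-1}})|$ the condition $|B_{2} - 2 B_{1}^{2}| \geq B_{1}$ becomes $1 \geq 1$, so Theorem~\ref{thm1} applies and yields $\tfrac{1}{4} + \tfrac{1}{16} = \tfrac{5}{16}$. For $|T_{2,1}(f^{-1})|$ the condition $B_{1} \leq |3 B_{1}^{2} - B_{2}|$ reads $1 \leq 2$, and the resulting bound is $1 + 1 = 2$. For $|T_{2,2}(f^{-1})|$ I also have to compute $\sigma_{3} = 2(B_{2} - 3 B_{1}^{2})/B_{1} = -4$ and $\mu_{3} = (8 B_{1}^{3} - 6 B_{1} B_{2} + B_{3})/B_{1} = \tfrac{5}{2}$, then confirm that $(\sigma_{3}, \mu_{3}) \in \Omega_{2} \cup \Omega_{3}$ (both regions, on the boundary $|\sigma| = 4$, demand $\mu \geq 2$, which $\tfrac{5}{2}$ satisfies), and finally substitute to obtain $1 + \tfrac{25}{36} = \tfrac{61}{36}$.

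The only genuinely delicate point is the $\Omega_{i}$-membership check in the last bound; as a sanity marker it is worth noting why $|T_{2,2}(F_{f^{-1}})|$ is absent from the corollary: for this $\varphi$ one finds $\sigma_{1} = -\tfrac{5}{2}$ and $\mu_{1} = \tfrac{1}{2}$, which fail every condition of Lemma~\ref{Lemma1} (for instance $\Omega_{2}$ would demand $\mu \geq \tfrac{19}{16}$), so the relevant theorem simply does not apply. Sharpness for each of the three listed bounds is then inherited from the extremal function $f_{\varphi}$ of~(\ref{ExtSphi}) specialized to $\varphi(z) = 1 + z e^{z}$, since the sharpness arguments in the parent theorems rely only on the same numerical data just substituted.
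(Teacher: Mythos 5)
Your proposal is correct and follows exactly the route the paper intends: computing $B_1=1$, $B_2=1$, $B_3=\tfrac12$ from $\varphi(z)=1+ze^z$, checking the hypotheses of Theorems \ref{thm1} and the two inverse-coefficient theorems, and substituting to obtain $\tfrac{5}{16}$, $2$, and $\tfrac{61}{36}$, with sharpness inherited from $f_\varphi$ in (\ref{ExtSphi}). Your side remark correctly explains the omission of $\vert T_{2,2}(F_{f^{-1}})\vert$, since $(\sigma_1,\mu_1)=(-\tfrac52,\tfrac12)$ indeed lies outside $\Omega_1\cup\Omega_2\cup\Omega_3$.
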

\begin{corollary}
    If $f \in \mathcal{S}_P$, then the following sharp estimates hold:
    $$ \vert T_{2,1}({f^{-1}})  \leq  \frac{128  (648-36 \pi ^2+5 \pi ^4 )}{9 \pi ^8} $$
    and
    $$ \vert T_{2,2}({f^{-1}}) \leq \frac{64 (\pi ^2-36)^2}{9 \pi ^8}+\frac{64 (23040-1440 \pi ^2+23 \pi ^4)^2}{18225 \pi ^{12}}. $$
\end{corollary}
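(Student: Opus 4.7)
The plan is to apply Theorem 7 (bound on $\vert T_{2,1}(f^{-1}) \vert$ in $\mathcal{S}^*(\varphi)$) and the subsequent theorem for $\vert T_{2,2}(f^{-1}) \vert$ in $\mathcal{S}^*(\varphi)$ with the specific choice $\varphi(z) = 1 + \frac{2}{\pi^2}\bigl(\log\frac{1+\sqrt{z}}{1-\sqrt{z}}\bigr)^2$ corresponding to $\mathcal{S}_P$. The first step is to read off $B_1, B_2, B_3$ from the Taylor expansion of this $\varphi$. Writing $\log\frac{1+u}{1-u} = 2u + \tfrac{2}{3}u^3 + \tfrac{2}{5}u^5 + \cdots$ with $u=\sqrt{z}$, squaring, and multiplying by $2/\pi^2$ yields
\begin{equation*}
B_1 = \frac{8}{\pi^2}, \qquad B_2 = \frac{16}{3\pi^2}, \qquad B_3 = \frac{184}{45\pi^2}.
\end{equation*}

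Next I would verify the hypotheses. For the $T_{2,1}$ bound we need $B_1 \leq \vert 3B_1^2 - B_2 \vert$, which reduces to $\vert 576 - 16\pi^2 \vert \geq 24\pi^2$; since $576 - 16\pi^2 \approx 418$ while $24\pi^2 \approx 237$, this is clear. For the $T_{2,2}$ bound one further needs $(\sigma_3, \mu_3) \in \Omega_2 \cup \Omega_3$ with $\sigma_3 = 2(B_2 - 3B_1^2)/B_1 = \tfrac{4}{3} - \tfrac{48}{\pi^2}$ and $\mu_3 = 8B_1^2 - 6B_2 + B_3/B_1 = \tfrac{512}{\pi^4} - \tfrac{32}{\pi^2} + \tfrac{23}{45}$. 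A quick numerical check gives $\vert \sigma_3 \vert \approx 3.53$ (so $\vert \sigma_3 \vert \in [2,4]$) and $\mu_3 \approx 2.53$, which exceeds $(\sigma_3^2 + 8)/12 \approx 1.71$, placing $(\sigma_3, \mu_3)$ in $\Omega_2$. I would record this as a short verification, as this condition-checking on $\Omega_i$ is the step with the greatest risk of a slip.

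The remaining work is algebraic substitution. Plugging the $B_j$ into Theorem 7 gives
\begin{equation*}
\vert T_{2,1}(f^{-1}) \vert \leq \frac{64}{\pi^4} + \frac{(576 - 16\pi^2)^2}{36\pi^8} = \frac{576\pi^4 + 256(1296 - 72\pi^2 + \pi^4)}{9\pi^8},
\end{equation*}
which I would collect to $\frac{128(648 - 36\pi^2 + 5\pi^4)}{9\pi^8}$. For $T_{2,2}$, the computation $8B_1^3 - 6B_1 B_2 + B_3 = \tfrac{8(23040 - 1440\pi^2 + 23\pi^4)}{45\pi^6}$ together with $(3B_1^2 - B_2)^2 = \tfrac{256(36 - \pi^2)^2}{9\pi^8}$ feeds directly into
\begin{equation*}
\vert T_{2,2}(f^{-1}) \vert \leq \frac{(B_2 - 3B_1^2)^2}{4} + \frac{(8B_1^3 - 6B_1 B_2 + B_3)^2}{9} = \frac{64(\pi^2 - 36)^2}{9\pi^8} + \frac{64(23040 - 1440\pi^2 + 23\pi^4)^2}{18225\pi^{12}},
\end{equation*}
yielding the stated inequality. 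Sharpness is inherited from the extremal function $f_\varphi$ of Theorem 7 with this particular $\varphi$, since both parent theorems are sharp on $f_\varphi$. The chief obstacle, beyond bookkeeping, is the $\Omega_i$-membership verification for $(\sigma_3, \mu_3)$; I would present the numerical check followed by an exact algebraic confirmation of $12\mu_3 - \sigma_3^2 - 8 \geq 0$ to make the argument rigorous.
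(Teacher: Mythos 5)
Your proposal is correct and follows exactly the route the paper intends for this corollary: compute $B_1=8/\pi^2$, $B_2=16/(3\pi^2)$, $B_3=184/(45\pi^2)$ from Rønning's $\varphi$, verify $B_1\le\vert 3B_1^2-B_2\vert$ and that $(\sigma_3,\mu_3)\in\Omega_2$, and substitute into the two $\mathcal{S}^*(\varphi)$ theorems for $\vert T_{2,1}(f^{-1})\vert$ and $\vert T_{2,2}(f^{-1})\vert$, with sharpness inherited from $f_\varphi$. The only blemish is a factor-of-$4$ slip in your intermediate numerator for $T_{2,1}$ (the term $\frac{(576-16\pi^2)^2}{36\pi^8}$ equals $\frac{64(1296-72\pi^2+\pi^4)}{9\pi^8}$, not $\frac{256(\cdots)}{9\pi^8}$), which does not affect your correctly collected final bound.
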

\begin{corollary}
  If $f\in \mathcal{S}^*_e$, then the following sharp estimates hold:
  $$ \vert T_{2,1}(F_{f^{-1}})\vert \leq \frac{25}{64}, \;\; \vert T_{2,2}(F_{f^{-1}})\vert \leq \frac{785}{2592}, \;\; \vert T_{2,1}({f^{-1}})\vert \leq \frac{41}{16},\;\;\vert T_{2,2}({f^{-1}})\vert \leq \frac{5869}{1296}. $$
\end{corollary}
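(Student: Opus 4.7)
The plan is to specialize the four main theorems in the paper to $\varphi(z)=e^{z}$. Expanding $e^{z}=1+z+\tfrac{1}{2}z^{2}+\tfrac{1}{6}z^{3}+\cdots$ gives $B_{1}=1$, $B_{2}=\tfrac{1}{2}$, $B_{3}=\tfrac{1}{6}$. The class $\mathcal{S}^{*}_{e}$ is a Ma-Minda starlike class, so the four theorems proved for $\mathcal{S}^{*}(\varphi)$ (for $|T_{2,1}(F_{f^{-1}})|$, $|T_{2,2}(F_{f^{-1}})|$, $|T_{2,1}(f^{-1})|$ and $|T_{2,2}(f^{-1})|$) can be invoked directly once their hypotheses are checked.

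First I verify the inequality hypotheses. For the two $T_{2,1}$ bounds I need $|B_{2}-2B_{1}^{2}|=\tfrac{3}{2}\ge 1=B_{1}$ and $|3B_{1}^{2}-B_{2}|=\tfrac{5}{2}\ge 1=B_{1}$, both of which are clear. For $|T_{2,2}(F_{f^{-1}})|$ I must check that the pair
\[
\sigma_{1}=-\tfrac{9B_{1}^{2}-4B_{2}}{2B_{1}}=-\tfrac{7}{2},\qquad
\mu_{1}=\tfrac{9B_{1}^{3}-9B_{1}B_{2}+2B_{3}}{2B_{1}}=\tfrac{29}{12}
\]
lies in $\bigcup_{i=1}^{3}\Omega_{i}$. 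Since $2\le|\sigma_{1}|=\tfrac{7}{2}\le 4$, I need to confirm $\mu_{1}\ge\tfrac{\sigma_{1}^{2}+8}{12}=\tfrac{27}{16}$, which holds because $\tfrac{29}{12}=\tfrac{116}{48}>\tfrac{81}{48}=\tfrac{27}{16}$, so $(\sigma_{1},\mu_{1})\in\Omega_{2}$. For $|T_{2,2}(f^{-1})|$ I compute
\[
\sigma_{3}=\tfrac{2(B_{2}-3B_{1}^{2})}{B_{1}}=-5,\qquad
\mu_{3}=\tfrac{8B_{1}^{3}-6B_{1}B_{2}+B_{3}}{B_{1}}=\tfrac{31}{6}.
\]
Here $|\sigma_{3}|=5\ge 4$, and $\tfrac{2}{3}(|\sigma_{3}|-1)=\tfrac{8}{3}=\tfrac{16}{6}\le\tfrac{31}{6}=\mu_{3}$, so $(\sigma_{3},\mu_{3})\in\Omega_{3}$.

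Once these verifications are in place, the four bounds follow by plain arithmetic substitution. Specifically, Theorem~\ref{thm1} gives $\tfrac{1}{4}+\tfrac{1}{16}(\tfrac{3}{2})^{2}=\tfrac{25}{64}$; the $T_{2,2}(F_{f^{-1}})$ theorem gives $\tfrac{9}{64}+\tfrac{1}{144}(\tfrac{29}{6})^{2}=\tfrac{729+841}{5184}=\tfrac{785}{2592}$; the $T_{2,1}(f^{-1})$ theorem gives $1+\tfrac{1}{4}(\tfrac{5}{2})^{2}=\tfrac{41}{16}$; and the $T_{2,2}(f^{-1})$ theorem gives $\tfrac{1}{4}(\tfrac{5}{2})^{2}+\tfrac{1}{9}(\tfrac{31}{6})^{2}=\tfrac{25}{16}+\tfrac{961}{324}=\tfrac{5869}{1296}$, matching the stated numbers. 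Sharpness is inherited from the extremal function $f_{\varphi}$ of~(\ref{ExtSphi}) evaluated at $\varphi(z)=e^{z}$.

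There is no serious obstacle here: the corollary is a routine specialization of results already established. The only bookkeeping that requires care is placing $(\sigma_{1},\mu_{1})$ and $(\sigma_{3},\mu_{3})$ in the correct $\Omega_{i}$, which reduces to comparing small rational numbers; I would present these checks explicitly to make the application of Lemma~\ref{Lemma1} transparent. The arithmetic consolidations (common denominators $5184$ and $1296$) are mechanical.
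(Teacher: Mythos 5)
Your proposal is correct and follows exactly the route the paper intends: the corollary is presented as a direct specialization of the four main theorems for $\mathcal{S}^*(\varphi)$ with $\varphi(z)=e^z$, i.e.\ $B_1=1$, $B_2=\tfrac{1}{2}$, $B_3=\tfrac{1}{6}$, and your verification of the hypotheses (including placing $(\sigma_1,\mu_1)\in\Omega_2$ and $(\sigma_3,\mu_3)\in\Omega_3$) and the arithmetic all check out. In fact your write-up is more explicit than the paper, which omits these routine verifications entirely.
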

\begin{corollary}
  If $f\in \Delta^*$, then the following sharp estimates hold:
  $$ \vert T_{2,1}(F_{f^{-1}})\vert \leq  \frac{25}{64}, \;\; \vert T_{2,2}(F_{f^{-1}})\vert \leq \frac{9}{32}, \;\; \vert T_{2,1}({f^{-1}})\vert \leq \frac{41}{16}, \;\;  \vert T_{2,2}({f^{-1}})\vert \leq    \frac{625}{144}.$$
\end{corollary}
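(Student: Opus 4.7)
The plan is to read off the Taylor coefficients of $\varphi(z)=z+\sqrt{1+z^2}$, verify the hypotheses of each of the four theorems in Sections~2 and~3, and substitute the numerical values. A direct expansion of $(1+z^2)^{1/2}=1+\tfrac{1}{2}z^2-\tfrac{1}{8}z^4+\cdots$ gives
\[
 \varphi(z)=1+z+\tfrac{1}{2}z^{2}+0\cdot z^{3}+\cdots,
\]
so $B_1=1$, $B_2=\tfrac{1}{2}$ and $B_3=0$. All four bounds will follow by plugging these numbers into the corresponding general estimates, provided the auxiliary inequalities on $(\sigma_i,\mu_i)$ are checked.

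First I would handle $\vert T_{2,1}(F_{f^{-1}})\vert$ and $\vert T_{2,1}(f^{-1})\vert$ by checking the cleaner condition $\vert B_{2}-2B_{1}^{2}\vert=\tfrac{3}{2}\ge 1=B_{1}$ for the logarithmic case (so Theorem~\ref{thm1} applies) and $\vert 3B_{1}^{2}-B_{2}\vert=\tfrac{5}{2}\ge 1=B_{1}$ for the inverse-coefficient case. Substituting into the formulas yields $\tfrac{1}{4}+\tfrac{1}{16}\cdot\tfrac{9}{4}=\tfrac{25}{64}$ and $1+\tfrac{25}{16}\cdot\tfrac{1}{4}\cdot 4=\tfrac{41}{16}$, respectively.

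The more delicate steps are the two $T_{2,2}$ bounds, where one must also certify that $(\sigma_i,\mu_i)\in\bigcup\Omega_i$ so that Lemma~\ref{Lemma1} applies. For the logarithmic case one computes
\[
 \sigma_1=-\tfrac{9-2}{2}=-\tfrac{7}{2},\qquad \mu_1=\tfrac{9-\tfrac{9}{2}}{2}=\tfrac{9}{4},
\]
and since $\vert\sigma_1\vert=\tfrac{7}{2}\in[2,4]$ with $\mu_1=\tfrac{9}{4}\ge\tfrac{1}{12}(\tfrac{49}{4}+8)=\tfrac{27}{16}$, the pair lies in $\Omega_2$. For the inverse-coefficient case one finds $\sigma_3=-5$ and $\mu_3=5$, and since $\vert\sigma_3\vert=5\ge 4$ with $\mu_3=5\ge\tfrac{2}{3}(5-1)=\tfrac{8}{3}$, the pair lies in $\Omega_3$. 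The algebraic simplifications then give $\tfrac{9}{64}+\tfrac{81}{576}=\tfrac{9}{32}$ and $\tfrac{25}{16}+\tfrac{25}{9}=\tfrac{625}{144}$.

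The main (and essentially only) obstacle is the bookkeeping for $(\sigma_1,\mu_1)$ and $(\sigma_3,\mu_3)$; if either failed the $\Omega_i$ test one would have to resort to an independent estimate of $\vert\Gamma_3\vert$ or $\vert b_4\vert$ via direct Schwarz-function techniques. Sharpness in every case is inherited automatically from the extremal function $f_{\varphi}$ of~(\ref{ExtSphi}), since $\Delta^{*}=\mathcal{S}^{*}(z+\sqrt{1+z^{2}})$ and the equality instances in the underlying theorems are realized by that very function.
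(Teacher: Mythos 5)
Your proposal is correct and follows exactly the route the paper intends for this corollary: specialize the four general theorems to $\varphi(z)=z+\sqrt{1+z^{2}}$ with $B_1=1$, $B_2=\tfrac12$, $B_3=0$, verify the hypotheses (including that $(\sigma_1,\mu_1)=(-\tfrac72,\tfrac94)\in\Omega_2$ and $(\sigma_3,\mu_3)=(-5,5)\in\Omega_3$), and simplify; all four numerical values check out.
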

\begin{corollary}
    If $f\in \mathcal{S}^*_L$, then the following sharp estimates hold:
  $$ \vert T_{2,1}(F_{f^{-1}})\vert \leq  \frac{1}{64}\;\; \text{and} \;\; \vert T_{2,1}({f^{-1}})\vert \leq  \frac{1}{16} .$$
\end{corollary}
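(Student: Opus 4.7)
The plan is to specialize Theorem~\ref{thm1} and the corresponding $T_{2,1}(f^{-1})$ result from Section~3 to the Ma-Minda function $\varphi(z) = \sqrt{1+z^2}$ which defines $\mathcal{S}^*_L$. First I would read off the Taylor coefficients from
\[
\sqrt{1+z^2} = 1 + \tfrac{1}{2}z^2 - \tfrac{1}{8}z^4 + \cdots,
\]
so that $B_1 = 0$, $B_2 = \tfrac{1}{2}$, $B_3 = 0$. With these values the two required hypotheses are trivially satisfied, since $\vert B_2 - 2B_1^2 \vert = \tfrac{1}{2} \geq 0 = B_1$ and $\vert 3B_1^2 - B_2 \vert = \tfrac{1}{2} \geq 0 = B_1$.

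Next, substituting these values into the two bounds of the cited theorems yields
\[
\vert T_{2,1}(F_{f^{-1}}) \vert \leq \frac{B_1^2}{4} + \frac{(2B_1^2 - B_2)^2}{16} = \frac{1}{64}, \qquad \vert T_{2,1}(f^{-1}) \vert \leq B_1^2 + \frac{(3B_1^2 - B_2)^2}{4} = \frac{1}{16}.
\]
For sharpness I would invoke the Ma-Minda extremal function $f_\varphi$ of~(\ref{ExtSphi}). Because $B_1 = 0$, its second Taylor coefficient $a_2 = iB_1$ vanishes, so $\Gamma_1 = 0$ and $b_2 = 0$; the explicit formulas recorded in the proofs of Theorem~\ref{thm1} and the first theorem of Section~3 then give $\Gamma_2 = (B_2 - 2B_1^2)/4 = \tfrac{1}{8}$ and $b_3 = (B_2 - 3B_1^2)/2 = \tfrac{1}{4}$, whence $\vert \Gamma_1^2 - \Gamma_2^2 \vert = \tfrac{1}{64}$ and $\vert b_2^2 - b_3^2 \vert = \tfrac{1}{16}$, matching the claimed bounds.

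The only point worth a moment of thought is to explain why the two $T_{2,2}$ estimates are not listed here, and this is really a restriction of scope rather than a genuine obstacle: the $T_{2,2}$ results in the paper require $(\sigma_i, \mu_i) \in \cup_{j} \Omega_j$ where the quantities $\sigma_i$ and $\mu_i$ are rational functions in $B_1, B_2, B_3$ with $B_1$ appearing in the denominator. Because $B_1 = 0$ for $\mathcal{S}^*_L$, these $(\sigma_i,\mu_i)$ are undefined and the corresponding theorems cannot be invoked directly, so the corollary records only the two bounds which the machinery of the paper delivers.
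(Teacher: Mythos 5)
Your proposal is correct and follows exactly the route the paper intends for its special-case corollaries: substitute $B_1=0$, $B_2=\tfrac{1}{2}$, $B_3=0$ from $\sqrt{1+z^2}$ into Theorem~\ref{thm1} and the first theorem of Section~3, verify the (trivially satisfied) hypotheses, and confirm sharpness via the extremal function $f_\varphi$ of~(\ref{ExtSphi}). Your closing observation---that the $T_{2,2}$ estimates are omitted for $\mathcal{S}^*_L$ because the parameters $\sigma_i,\mu_i$ carry $B_1$ in the denominator and are therefore undefined when $B_1=0$---correctly accounts for the corollary's restricted scope.
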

%%%%%%%%%%%%%%%%%%%%%%%%%%%%%%%%%%%%%%%%%%%%%%%%%%%%%%%%%%%%%%%%%%%%%%%%%%%%%%%%%%%%%%%%%%%%%%%%%%%%%%%%%%%%%%%%%%%%%%%%%%
\section*{Declarations}

\subsection*{Conflict of interest}
	The authors declare that they have no conflict of interest.
%\subsection*{Author Contribution}
%    Each author contributed equally to the research and preparation of manuscript.
\subsection*{Data Availability} Not Applicable.
%%%%%%%%%%%%%%%%%%%%%%%%%%%%%%%%%%%%%%%%%%%%%%%%%%%%%%%%%%
\noindent
%%%%%%%%%%%%%%%%%%%%%%%%%%%%%%%%%%%%%%%%%%%%%%%%%%%%%%%%%%%%%%%%%%%%%%%%%%%%%%%%%%%%%%%%%%%%%%%%%%%

\end{document}